\newtheorem{prop}{Proposition}
\newtheorem{thm}{Theorem}
\newtheorem{cor}{Corollary}
\newtheorem{ex}{Example}
\newtheorem{alg}{Algorithm}
\title{Cox processes driven by transformed Gaussian processes on linear networks -- A  review and new contributions}
\author[1]{Jesper M\o ller}
\author[1]{Jakob G.\ Rasmussen}
\affil[1]{Department of Mathematical Sciences, Aalborg University, Skjernvej 4A, DK-9220~Aalborg~\O, Denmark}
\runningauthor{Jesper M\o ller and Jakob G.\ Rasmussen} 
\begin{document}

\maketitle

\begin{abstract}
{There is a lack of point process models on linear networks. For an arbitrary linear network, we consider new models for a Cox process with an isotropic pair correlation function obtained in various ways by
transforming an isotropic Gaussian process which is used for driving the random intensity function of the Cox process. In particular we introduce three model classes given by log Gaussian, interrupted, and permanental Cox processes on linear networks,
 and consider for the first time statistical procedures and applications for parametric families of such models.
 Moreover, we construct new simulation algorithms for Gaussian processes  on linear networks and
discuss whether the geode\-sic metric  or the resistance metric should be used for the kind of Cox processes studied in this paper.}

\textbf{Keywords} -- 
estimation,
geodesic distance,
interrupted point process,
isotropic covariance function,
log Gaussian Cox process,
moments, 
permanental Cox point process, 
resistance metric,
simulation.
\end{abstract}




%

\section{Introduction}\label{s:intro}

In Sir David R.\ Cox's highly influential paper `Some Statistical Methods Connected with Series of Events' \citep{Cox55} he invented  
doubly stochastic Poisson processes obtained by a generalization of Poisson processes where the intensity function $\Lambda$ that varies over space or time is a stochastic process. 
These Cox models play nowadays an important role when analysing point patterns in Euclidean spaces or on  spheres \citep[see][and the references therein]{MW04,spatstat,LawrenceEtAl}. 
In particular, Cox processes driven by a transformed Gaussian process (GP) $Y$ or independent copies $Y_1,\ldots,Y_h$ of $Y$
play a major role: A Log Gaussian Cox process (LGCP) has $\Lambda(u)=\exp(Y(u))$ \citep{LGCP,CM}; LGCPs constitute the most widely used subclass of Cox processes. Further, an interrupted Cox process (ICP) is obtained by an independent thinning of a Poisson process, where the selection probability of a point $u$ is given by  $\exp(-\sum_{i=1}^hY_i(u)^2)$ \citep[][]{Stoyan,FredMe}. Moreover, a permanental Cox point process (PCPP) is obtained if $\Lambda(u)=\sum_{i=1}^hY_i(u)^2$ \citep{Macchi:75,MacCM}.

In recent years there has been an increasing interest in analysing point patterns on a linear network $L$, that is, $L$ is a connected set in $\mathbb R^k$ (the real coordinate space of dimension $k$) given by a finite union of bounded, closed, line segments  
which can only overlap at their endpoints, see
\citet{AngEtAl}, \citet{spatstat}, the references therein as well as further references given later in the present paper.
Figure~\ref{fig:data} shows two examples of point patterns observed on linear networks, which we will use for illustrative purposes throughout the paper.
The first dataset was first analysed in \citet{AngEtAl} and consists of 116 locations of street crimes reported in the period 25 April to 8 May 2002 in an area close to the University of Chicago. 
The second dataset is 
one 
of the six point pattern datasets analysed in \citet{HeidiMe}  and 
consists of 566 locations of spines on a dendrite tree protruding from a neuron \citep[see also][]{RasmussenChristensen}.
 
\begin{figure}
	\begin{center}
		\includegraphics[width=7cm]{"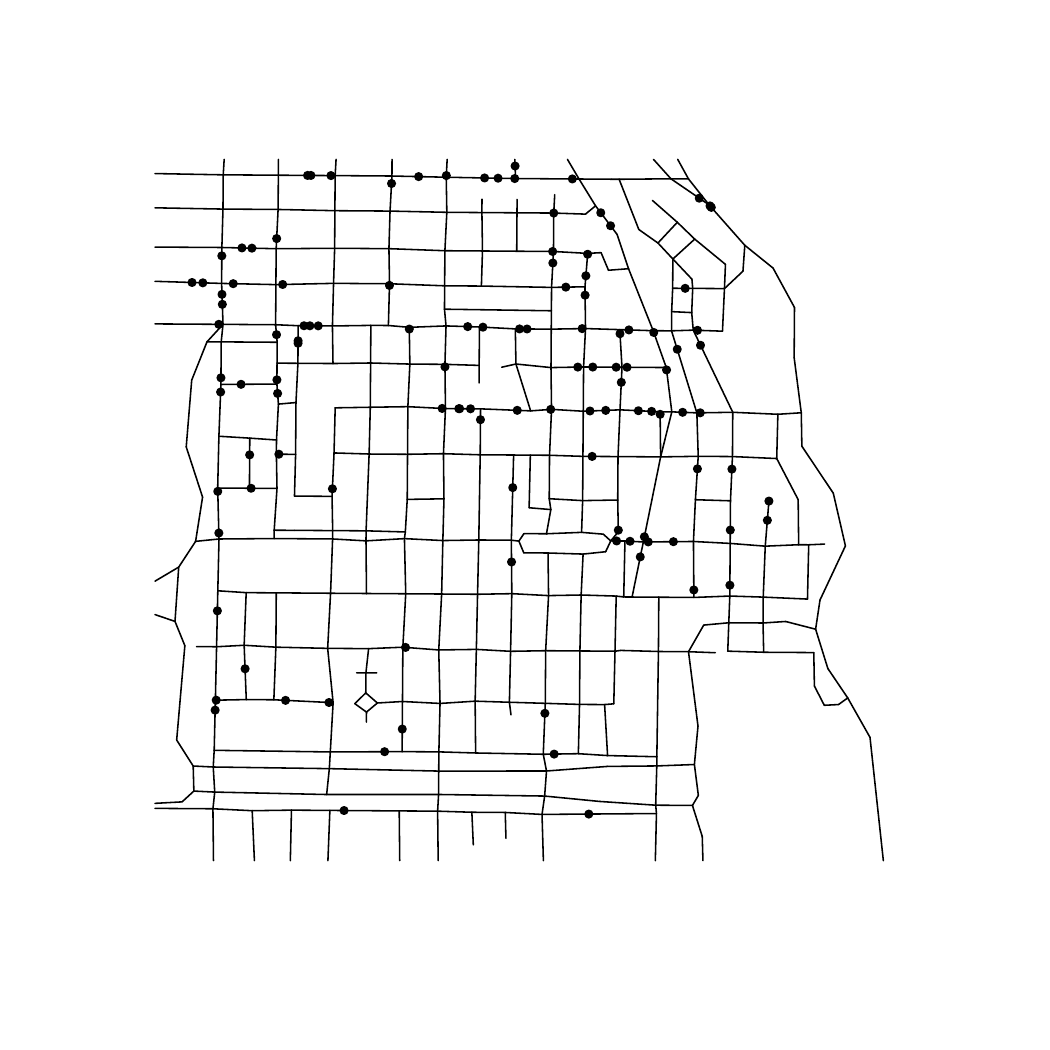"}
		\includegraphics[width=7cm]{"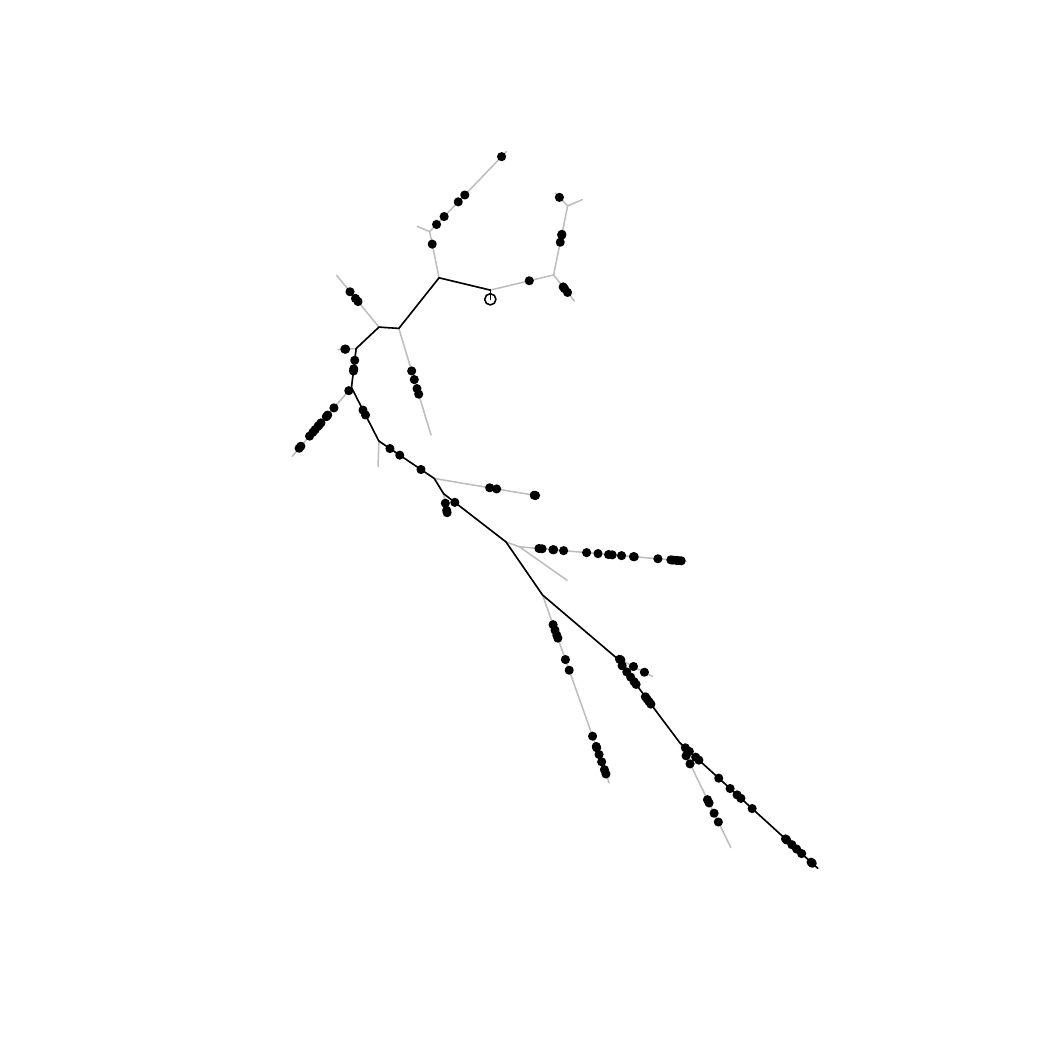"}
	\end{center}
	\caption{Left: The locations of street crimes in a part of Chicago. Right: The locations of spines on a dendrite tree \citep[dataset number five in][]{HeidiMe}. The circle marks the root of the tree, the black lines are a main branch, and the grey lines are side branches.}
	\label{fig:data}
\end{figure}

The contribution of the present paper is the following. We
 use isotropic covariance function models with respect to the geodesic metric $d_{\mathcal G}$ or the resistance metric $d_{\mathcal R}$
 on $L$ as
developed in \citet{AnderesEtAl} as well as new models developed in the present paper in order to construct models for isotropic GPs and hence new models for LGCPs, ICPs, and PCPPs on linear networks with isotropic pair correlation functions (more details follow in the next paragraph). Also we construct new simulation algorithms for GPs and consider for the first time statistical procedures and applications for parametric families of LGCPs, ICPs, and PCPPs on linear networks (however, our approach to parameter estimation only works well when using exponential covariance functions; see Sections~\ref{s:den-an}, \ref{s:simstudy}, and \ref{s:7.3} for details). Moreover, in continuation of the considerations in
\citet{AnderesEtAl} and \citet{RakshitEtAl}, comments in Sections~\ref{s:2}--\ref{s:conclusion} highlight the interest of $d_{\mathcal R}$ compared to $d_{\mathcal G}$.
Incidentally, we also establish new useful results for the resistance metric.

In brief, the paper consists of two parts, Sections~\ref{s:2}-\ref{s:GP-cov-fct} on our setting for isotropic covariance functions and related GPs, 
and Sections~\ref{s:3}-\ref{s:conclusion} on point processes, in particular Cox processes, including the cases of LGCPs, ICPs, and PCPPs on linear networks and how these models can be used for fitting real data. 
In more detail, the paper is organized as follows. 
Section~\ref{s:new-setting} discusses the definition of a linear network $L$ equipped with a metric $d$, where in particular we have in mind that $d$ is either $d_{\mathcal G}$ or $d_{\mathcal R}$. Section~\ref{s:res-met} gives a summary of results for $d_{\mathcal R}$, including a useful expression for the metric.  Section~\ref{s:iso-cov-fct} 
 studies isotropic covariance functions of the form $c(u,v)=c_0(d(u,v))$ and provides a less technical summary of results from \citet{AnderesEtAl} together with examples of isotropic covariance functions not appearing in that paper.
Simulation algorithms for GPs on $L$ with an isotropic covariance function are developed in Section~\ref{s:sim-GP}, where the case with $L$ a tree is particularly tractable. Our setting for point processes on linear networks is given in Section~\ref{s:3.1}, and Section~\ref{s:3.2} introduces first and higher order intensity functions which become useful when we later consider Cox process models. In particular, we focus on the pair correlation function and
the related $K$-function defined in Section~\ref{s:3.2.3}. As discussed in Section~\ref{s:est-check},  $g$, $K$, and other functional characteristics for point processes become useful for statistical inference. Section~\ref{s:cox-general} treats Cox processes on linear networks, and Section~\ref{s:models} surveys the properties of LGCPs, ICPs, and PCPPs models. Section~\ref{s:appl} demonstrates how these models may be fitted to real and simulated data. Finally, Section~\ref{s:conclusion} summarises our findings and discuss some open problems. 

At this point we should stress the importance of considering a pair correlation function (pcf) $g(u,v)$ with $u,v\in L$ to be isotropic, that is, $g(u,v)=g_0(d(u,v))$ for all $u,v\in L$:
\begin{description}
\item[(I)] it is easier to handle the one-dimensional function $g_0$ than the function $g$ defined on $L\times L$;
\item[(II)] as we shall see in Section~\ref{s:3.2.3}, the $K$-function is easily defined when $g$ is isotropic;
\item[(III)] to the best of our knowledge, 
 nonparametric estimators of  $g$ and $K$ have always been derived under the assumption that $g$ is isotropic;
\item[(IV)] moment based estimation procedures such as minimum contrast, composite likelihood, and Palm likelihood methods 
\citep[see][and the references therein]{MW07,MW17} 
become computational convenient if $g$ is isotropic;
 \item[(V)] in particular, for LGCPs, ICPs, and PCPPs, isotropy of $g$ becomes equivalent to isotropy of the covariance function for the underlying GPs;
 \item[(VI)] so far flexible model classes for covariance functions have mainly been developed in the isotropic case.
\end{description}
Indeed, assuming isotropy of the pcf has been a working assumption in most papers (including the present paper), but this assumption may of course be debated. For example, it means that the correlation is the same between two points independently of whether they are on the same line segment or not. In Section~\ref{s:7.1} we briefly discuss the interesting paper by \citet{BSW} which provides a new class of anisotropic Gaussian random fields.

A substantial part of this work was the development of an R-package \texttt{coxln}, in which the methods developed in the present paper are implemented. This package is available on Github under the author \texttt{gulddahl}. Moreover, we used the R-package \texttt{spatstat} extensively throughout the paper, see \cite{spatstat}. 

\section{Linear networks and metrics}\label{s:2}  

\subsection{Setting}\label{s:new-setting} 


This section specifies the setting of a linear network.

Denote $\mathbb R^k$ the $k$-dimensional Euclidean space, $k\in\{1,2,...\}$. For a linear network $L=\cup_{i=1}^m L_i$, we assume $m<\infty$, each $L_i\subset\mathbb R^k$ is a closed line segment of  length $l_i\in (0,\infty)$, $L_i\cap L_j$ is either empty or an endpoint of both $L_i$ and $L_j$ whenever $i\not=j$, and $L$ is a path-connected set. We equip $L$ with a `natural' metric $d$ (as given below) and arc length measure $\nu(A)=\int_A\mathrm d_L(u)$ for Borel sets $A\subseteq L$. We let $|L|=\nu(L)=\sum_{i=1}^m l_i$ denote the length of the linear network.

Various remarks are in order.

For disconnected linear networks, definitions and results may be applied
separately to each connected component of the network if we consider independent Gaussian processes on the connected components and independent point processes on the connected components.

The definition of a linear network may be extended to the more abstract case of a graph with Euclidean edges  \citep{AnderesEtAl} but 
we avoid this generalization 
for ease of presentation and 
since statistical methods have so far only been developed for the case (c). 

For each line segment $L_i$, there are two possible arc length parametrisations. We assume one is chosen and given by $u_i(t)=(1-t/l_i)a_i+(t/l_i)b_i$, $t\in[0,l_i]$, where $a_i$ and $b_i$ are the endpoints of $L_i$. The definitions and results in this paper will not depend on this choice, including when calculating arc length measure restricted to $L_i$: For Borel sets $A\subseteq L_i$, $\nu(A)=\int_0^{l_i}1(u_i(t)\in A)\,\mathrm dt$ where $1(\cdot)$ denotes the indicator function. Furthermore, let $V$ denote the set of endpoints of $L_1,\ldots,L_m$ and consider the graph with vertex set $V$ and edge set $E$ given by $L_1,\ldots,L_m$. Thus, two distinct vertices $u,v\in V$ form an edge if and only if $\{u,v\}=\{a_j,b_j\}$ for some $j\in\{1,\ldots,m\}$, in which case we write $u\sim v$. 

We have  
two cases of natural metrics in mind, namely when $d$ is the geodesic metric $d_\mathcal{G}$ or the resistance metric $d_\mathcal{R}$.
For $u,v\in L$, $d_\mathcal{G}(u,v)=\min\nu(p_{uv})$ where the minimum is over all paths $p_{uv}\subseteq L$ connecting $u$ and $v$.
Section~\ref{s:res-met} below provides the more technical definition of $d_\mathcal{R}$.

Indeed there are other interesting metric
including the least-cost metric \citep{RakshitEtAl}, 
but to the best of our knowledge parametric models for isotropic covariance functions $c(u,v)=c_0(d(u,v))$ 
have so far only been developed when $d=d_\mathcal{G}$, $d=d_\mathcal{R}$, or $d$ is given by the usual Euclidean distance. However, Euclidean distance is usually not a natural metric on a linear network.

\subsection{The resistance metric}\label{s:res-met} 

This section defines the resistance metric $d_\mathcal{R}$ for a graph with Euclidean edges \citep{AnderesEtAl} in the special case of
 a linear network $L=\cup_{i=1}^m L_i$ as given in case (c) in Section~\ref{s:new-setting}. The section also summarises some properties of $d_\mathcal{R}$ and compares with $d_\mathcal{G}$. 
 
 Consider the graph $G=(V,E)$ and its relation $\sim$ as defined above, and denote $d_V$  the classic (effective) resistance metric $d_V$ on $V$ \citep{KleinRandic}. 
 Since $d_\mathcal{R}$ is an extension of $d_V$ to $L$, we start by recalling the definition of $d_V$ using a notation as follows.
Let $u_0\in V$ be an arbitrarily chosen vertex called the origin.
For any $u,v\in V$, define the so-called conductance function by
\[{\mathrm{con}}(u,v)=\begin{cases*}
                    1/\|u-v\| & \text{if } $u\sim v$  \\
                     0 & \text{otherwise}
                 \end{cases*}
                 \] 
and define a matrix $\Delta$ with rows and columns indexed by $V$ so that its entry $(u,v)$ is given by
\[\Delta(u,v)=\begin{cases*}
                    1+c(u) & \text{if } $u=v=u_0$  \\
                     c(u) & \text{if } $u=v\not=u_0$\\
                     -{\mathrm{con}}(u,v) & \text{otherwise}
                 \end{cases*}
                 \] 
where $c(u)=\sum_{w\in V:\,w\sim u}{\mathrm{con}}(u,w)$ is the sum of the conductances associated to the edges incident to vertex $u$.
In fact $\Delta$ is symmetric and strictly positive definite, and it is similar to 
the `Laplacian matrix' obtained 
when viewing $G$ as an electrical network over the nodes with resistors given by the length of each line segment \citep[see e.g.][]{Kigami,JorgensenPearse} 
except that $\Delta$ has the additional $1$ added at entry $(u_0,u_0)$ (this makes $\Delta$ invertible). 

Now, let $B_0$ be a zero mean Gaussian vector indexed by $V$ and having covariance matrix $\Sigma=\Delta^{-1}$. Then
the resistance metric on $V$ is the variogram
\begin{equation}
\label{e:d-V}
d_V(u,v)=\mathbb{V}{\mathrm{ar}}(B_0(u)-B_0(v))=\Sigma(u,u)+\Sigma(v,v)-2\Sigma(u,v)\quad\mbox{for } u,v\in V.
\end{equation}
Extend $B_0$ by linear interpolation to a zero mean Gaussian process (GP) $Z_0$ on $L$ so that 
\[Z_0(u)=\frac{\|u-b_i\|}{l_i}B_0(a_i)+\frac{\|u-a_i\|}{l_i}B_0(b_i)\quad\mbox{for }u\in L_i.\]
For $i=1,\ldots,m$, define a zero mean Brownian bridge $B_i$ on $L_i$ so that  
\[\mathbb{C}{\mathrm{ov}}(B_i(u),B_i(v))=\min\{\|u-a_i\|\|v-b_i\|,\|v-a_i\|\|u-b_i\|\}/l_i\quad\mbox{for $u,v\in L_i$,}
\]
and define
\[Z_i(u)=\begin{cases*}
                    B_i(u) & \text{for } $u\in L_i$  \\
                     0 & \text{for } $u\in L\setminus L_i$.
                 \end{cases*}
                 \]
Finally, the resistance metric on $L$ is defined by 
 \begin{equation}\label{e:def-dR}
 d_{\mathcal{R}}(u,v) 
 =\sum_{i=0}^m \mathbb{V}{\mathrm{ar}}(Z_i(u)-Z_i(v))\quad\mbox{for } u,v\in L.
 \end{equation}
 
For the following theorem, which follows from \citet[][Propositions~2-4]{AnderesEtAl}, we use a terminology as follows. 
A closed line segment in $\mathbb R^k$ with endpoints $a$ and $b$ is denoted $[a,b]=\{at+b(1-t)\,|\, 0\le t\le 1\}$.
A path is a subset of $L$ of the form $[u,v_1]\cup [v_1,v_2]\cdots\cup [v_{i-1},v_{i}]\cup[v_{i},v]$ where $u,v\in L$, $v_1,\ldots,v_i\in V$ are vertices, $i\ge0$ is an integer, and we interpret $[v_1,v_2]\cdots\cup [v_{i-1},v_{i}]$ as the empty set if $i=0$. 
If all vertices in $G$ are of order two, we say that $L$ is a loop (since $L$ is isomorphic to a circle). If there is no loop, we say that $L$ is a tree.  

\begin{thm}\label{t:res-m} We have the following properties of $d_{\mathcal{G}}$ and $d_{\mathcal{R}}$.
\begin{description}
\item[(A)] The definition \eqref{e:def-dR} of $d_{\mathcal{R}}$ does not depend on the choice of origin $u_0\in V$.
\item[(B)] Both $d_{\mathcal{G}}$ and $d_{\mathcal{R}}$ are metrics on $L$, and their definitions are invariant to splitting a line segment $L_i$ into two line segments.
\item[(C)] For every $u,v\in L$, $d_\mathcal{G}(u,v)\ge d_\mathcal{R}(u,v)$, with equality if and only if there is only one path connecting $u$ and $v$. In particular,
$d_\mathcal{G}= d_\mathcal{R}$ if and only if $L$ is a tree.
\item[(D)] If $G$ is a loop, then $d_{\mathcal{R}}(u,v)=d_{\mathcal{G}}(u,v)-d_{\mathcal{G}}(u,v)^2/\sum_{i=1}^ml_i$.
\end{description} 
\end{thm}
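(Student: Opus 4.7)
The plan is to treat (A)--(D) in order, leaning on the electrical-network viewpoint and on classical identities for effective resistance. Throughout I identify $\Delta$ as $L_V+e_{u_0}e_{u_0}^\top$, where $L_V$ is the weighted graph Laplacian of $(V,E)$ with edge-weights $\mathrm{con}(u,v)=1/\|u-v\|$. Since $G$ is connected, $\ker L_V$ is spanned by the all-ones vector $\mathbf 1$, so $L_V$ admits a Moore--Penrose pseudoinverse $L_V^+$.

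For (A), I would show the quadratic form $(e_u-e_v)^\top \Sigma_{u_0}(e_u-e_v)$ defining $d_V(u,v)$ is independent of $u_0$. The vector $e_u-e_v$ lies in $\mathbf 1^\perp$, and on this subspace $L_V+e_{u_0}e_{u_0}^\top$ coincides with $L_V$ up to a term parallel to $\mathbf 1$; equivalently, by Sherman--Morrison, $\Sigma_{u_0}$ and $L_V^+$ differ only by a rank-one correction of the form $\alpha\mathbf 1 w^\top+w\mathbf 1^\top+\beta\mathbf 1\mathbf 1^\top$, which vanishes when sandwiched by $(e_u-e_v)$. Consequently $d_V(u,v)=(e_u-e_v)^\top L_V^+(e_u-e_v)$, the standard formula for effective resistance. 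Invariance of \eqref{e:def-dR} then follows because the extensions $Z_0,\ldots,Z_m$ are built from $B_0$ and the Brownian bridges $B_i$ in a $u_0$-free way.

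For (B), $d_\mathcal G$ is clearly a metric, and splitting a segment does not alter path-lengths. For $d_\mathcal R$, nonnegativity and symmetry are immediate from \eqref{e:def-dR}; the triangle inequality follows from Schoenberg's theorem (or directly: $\sqrt{d_\mathcal R(u,v)}$ is the $L^2$-norm $\|Z(u)-Z(v)\|$ with $Z=\sum_{i=0}^m Z_i$ on a common probability space with independent components). Positive-definiteness follows since $Z_0$ is linear on each $L_i$ while $B_i$ distinguishes interior points of $L_i$, so $u\mapsto(Z_0(u),\ldots,Z_m(u))$ is injective on $L$. For invariance under subdividing $L_j$ at a new vertex $w$, on the electrical side this is simply the series-combination of resistors, so $L_V^+$ restricted to the original vertices is unchanged; on the process side, the Brownian bridge $B_j$ on $L_j$ decomposes as a linear interpolation of its value at $w$ plus independent Brownian bridges on $[a_j,w]$ and $[w,b_j]$, matching the construction on the refined graph. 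This bookkeeping of the process decomposition is the most delicate technical step.

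For (C), Rayleigh's monotonicity principle implies $d_\mathcal R(u,v)\le \mathrm{length}(p)$ for every path $p$ from $u$ to $v$, so $d_\mathcal R\le d_\mathcal G$. If there is a second path disjoint from a shortest path $p^*$, restricting to these two parallel routes of lengths $\ell_1\le\ell_2$ already yields an effective resistance at most $\ell_1\ell_2/(\ell_1+\ell_2)<\ell_1=d_\mathcal G(u,v)$, so strict inequality holds. Uniqueness of the connecting path at every pair $u,v$ is equivalent to $L$ being a tree (any cycle produces two routes between its points), which establishes the ``iff'' statement. Finally, (D) is immediate from (B) and the parallel-resistor formula: on a loop of total length $T=\sum_{i=1}^m l_i$, any two points are joined by exactly two arcs of lengths $a=d_\mathcal G(u,v)$ and $T-a$, giving $d_\mathcal R(u,v)=a(T-a)/T=d_\mathcal G(u,v)-d_\mathcal G(u,v)^2/T$. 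The principal obstacle is (A)--(B): making the rank-one Sherman--Morrison argument and the Gaussian-process splitting argument airtight, since they underpin all subsequent identifications of $d_\mathcal R$ with effective resistance.
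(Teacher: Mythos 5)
First, a point of comparison: the paper does not prove this theorem at all --- it is stated as following from \citet[][Propositions~2--4]{AnderesEtAl} --- so your attempt is a from-scratch reconstruction rather than a parallel to an in-paper argument. Most of your plan is sound and matches the standard electrical-network toolkit: the reduction of $(e_u-e_v)^\top\Sigma(e_u-e_v)$ to $(e_u-e_v)^\top L_V^+(e_u-e_v)$ for (A) works (more simply than via Sherman--Morrison: if $Mx=y$ with $M=L_V+e_{u_0}e_{u_0}^\top$ and $y\perp\mathbf 1$, pairing with $\mathbf 1$ forces $x(u_0)\cdot 1=0$, so $x=L_V^+y$ modulo $\mathbf 1$), and you should add the one-line observation that $Z_0(u)-Z_0(v)$ is a linear combination of $B_0$-values whose coefficients sum to zero, so the whole of \eqref{e:def-dR} reduces to such quadratic forms. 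The series/parallel reductions for the splitting invariance and for (D), and Rayleigh monotonicity for the inequality in (C), are all correct. In (C) you should not assume the second path is internally disjoint from the shortest one: two distinct $u$--$v$ paths may share segments, and you need the extra (standard) step of locating a nontrivial ``parallel lens'' somewhere along the union of the two paths before the parallel-resistor bound applies.

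The genuine gap is the triangle inequality for $d_{\mathcal R}$ in (B). Both of your proposed arguments --- Schoenberg's theorem and the identification of $d_{\mathcal R}(u,v)$ with $\|Z(u)-Z(v)\|_{L^2}^2$ --- establish that $\sqrt{d_{\mathcal R}}$ embeds isometrically into a Hilbert space, hence that $\sqrt{d_{\mathcal R}}$ satisfies the triangle inequality. That gives only $d_{\mathcal R}(u,w)\le d_{\mathcal R}(u,v)+d_{\mathcal R}(v,w)+2\sqrt{d_{\mathcal R}(u,v)\,d_{\mathcal R}(v,w)}$, which is strictly weaker than what is claimed. The triangle inequality for effective resistance itself is true but needs a different mechanism: writing $d_{\mathcal R}(u,w)=(e_u-e_w)^\top L_V^+\bigl((e_u-e_v)+(e_v-e_w)\bigr)$ and invoking the maximum principle for the harmonic potential $\phi_{uv}=L_V^+(e_u-e_v)$ (it attains its maximum at $u$ and minimum at $v$), or equivalently a unit-current-flow superposition argument; by your splitting invariance one may first reduce $u,v,w$ to vertices. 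As written, the metric property of $d_{\mathcal R}$ --- on which the rest of the theorem leans --- is not established.
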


While $d_{\mathcal{R}}$ is not as intuitive as $d_{\mathcal{G}}$,  it reflects the topology of $L$: Without loss of generality assume that $u,v\in V$, since if e.g.\ $u\in L_j\setminus V$, we may split $L_j$ into the two line segments with endpoints $\{a_j,v\}$ and $\{v,b_j\}$, and then consider a new graph with vertex set $V\cup\{u\}$ and edge set $E\cup\{\{a_j,v\},\{v,b_j\}\}$, cf.\ (B) in Theorem~\ref{t:res-m}. Viewing the graph $(V,E)$ as an electrical network with resistor $l_i$ at edge $L_i$, $i=1,\ldots,m$, we have that $d_\mathcal{R}(u,v)$ is the effective resistance between $u$ and $v$ as obtained by Kirkhoff's laws.  These laws are in accordance with (C). 
For example,  
for the Chicago street network in Figure~\ref{fig:data}, 
 $\max d_{\mathcal R}\approx 675$  
feet is much smaller than $\max d_{\mathcal G}\approx 2031$ feet, and it is also smaller than the side length of a square surrounding the network which is a little less than 1000 feet. 
Finally, the result in (D) quantifies for a circular network how much less $d_\mathcal{G}$ will be than $d_\mathcal{R}$. 
Of course it could be debated if using the resistance distance for the street network and in (D) are the right ways of quantifying connectedness, but at least we are not aware of 
any other metric on $L$ than $d_{\mathcal{R}}$ which reflects the topology and is useful for constructing valid isotropic pair correlation functions  (as considered in Section~\ref{s:GP-cov-fct} and later on). 
Moreover,
the following proposition and the remarks below show that $d_{\mathcal{R}}(u,v)$ is nicely behaving.
 
\begin{prop}\label{p:calc}  
For any $u\in L_j$ and $v\in L_i$, 
let 
\[s=\|u-a_j\|,\quad t=\|v-a_i\|,\quad A_i=d_V(a_i,b_i)/l_i^2-1/ l_i.\]
Then $A_i\le0$ with equality if and only if $L_i$ is the only path connecting $a_i$ and $b_i$, and $d_{\mathcal{R}}(u,v)$ satisfies the following. 
\begin{description}
\item[(A)] If $i=j$ then
\begin{equation}\label{e:d-R1}
d_{\mathcal{R}}(u,v) 
=\begin{cases}
A_i(t-s)^2+t-s & \text{if }t\ge s,\\
A_i(s-t)^2+s-t & \text{if }t\le s,
\end{cases}
\end{equation}
so $d_{\mathcal{R}}(u,v)$ considered as a function of $t$ is linear (the case $A_i=0$) or quadratic (the case $A_i<0$) on each of the intervals $[0,s]$ and $[s,l_i]$, continuous on $[0,l_i]$, and differentiable on $[0,l_i]\setminus\{s\}$.
\item[(B)] If $i\not =j$ then
\begin{equation}\label{e:d-R2}
d_{\mathcal{R}}(u,v) 
=A_it^2+B_{ij}(s)t+C_{ij}(s)
\end{equation}
where
\[B_{ij}(s)=1-\frac{2}{l_i}\left[\Sigma(a_i,a_i)-\Sigma(a_i,b_i)-\frac{l_j-s}{l_j}\Sigma(a_j,a_i)+\frac{l_j-s}{l_j}\Sigma(a_j,b_i)-\frac{s}{l_j}\Sigma(b_j,a_i)+\frac{s}{l_j}\Sigma(b_j,b_i)\right]\]
and
\[C_{ij}(s)=\frac{(l_j-s)^2}{l_j^2}\Sigma(a_j,a_j)+\frac{s^2}{l_j^2}\Sigma(b_j,b_j)+2\frac{s(l_j-s)}{l_j^2}\Sigma(a_j,b_j)+\Sigma(a_i,a_i)-2\frac{l_j-s}{l_j}\Sigma(a_j,a_i)-2\frac{s}{l_j}\Sigma(b_j,a_i)+\frac{s(l_j-s)}{l_j},\]
so $d_{\mathcal{R}}(u,v)$ is a linear or quadratic concave function of $t\in[0,l_i]$.
\item[(C)] If $i\not =j$ then
\begin{equation}
d_{\mathcal{R}}(u,v)\ge\min\{d_{\mathcal{R}}(u,a_i),d_{\mathcal{R}}(u,b_i)\}.\label{e:min}
\end{equation}
\end{description}
\end{prop}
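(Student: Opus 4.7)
My plan is to use definition \eqref{e:def-dR}, writing $d_{\mathcal R}(u,v)=\sum_{k=0}^m \mathbb{V}{\mathrm{ar}}(Z_k(u)-Z_k(v))$, and exploit that each $Z_k$ with $k\ge1$ is supported on $L_k$ and vanishes at its endpoints. Thus for $u\in L_j$ and $v\in L_i$ only the summands indexed by $0$, $j$, and (when $i\ne j$) $i$ contribute, while the others vanish.

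I would first establish $A_i\le0$. Viewing $L_i$ as a single resistor of value $l_i$ between $a_i$ and $b_i$, this resistor gives one path while the rest of the network provides parallel paths, so by the standard parallel-resistor argument $d_V(a_i,b_i)\le l_i$, with equality iff $L_i$ is the only path connecting $a_i$ and $b_i$. Hence $A_i=(d_V(a_i,b_i)-l_i)/l_i^2\le 0$ with the stated equality case.

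For (A), with $u=u_j(s)$ and $v=u_j(t)$ on the same segment, linearity of $Z_0$ on $L_j$ gives $Z_0(u)-Z_0(v)=\frac{t-s}{l_j}(B_0(a_j)-B_0(b_j))$, so $\mathbb{V}{\mathrm{ar}}(Z_0(u)-Z_0(v))=(t-s)^2 d_V(a_j,b_j)/l_j^2$. A direct computation from the Brownian-bridge covariance gives $\mathbb{V}{\mathrm{ar}}(B_j(u)-B_j(v))=|t-s|-(t-s)^2/l_j$ (splitting on $t\ge s$ and $t\le s$), and summing the two contributions yields $A_j(t-s)^2+|t-s|$ as required. For (B), I would expand $Z_0(u)-Z_0(v)$ as a linear combination of $B_0(a_j),B_0(b_j),B_0(a_i),B_0(b_i)$ with weights $(l_j-s)/l_j$, $s/l_j$, $(l_i-t)/l_i$, $t/l_i$, and then add the two independent bridge variances $\mathbb{V}{\mathrm{ar}}(B_j(u))=s(l_j-s)/l_j$ and $\mathbb{V}{\mathrm{ar}}(B_i(v))=t(l_i-t)/l_i$. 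Collecting powers of $t$, the coefficient of $t^2$ equals $(\Sigma(a_i,a_i)+\Sigma(b_i,b_i)-2\Sigma(a_i,b_i))/l_i^2-1/l_i=A_i$ by \eqref{e:d-V}; the coefficient of $t$ and the constant then read off as $B_{ij}(s)$ and $C_{ij}(s)$. The main obstacle is simply to organise this bookkeeping cleanly; concavity in $t$ is automatic once $A_i\le 0$ is in hand.

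Part (C) is then immediate: by (B) and $A_i\le 0$, the map $t\mapsto d_{\mathcal R}(u,v)$ is concave on $[0,l_i]$, hence attains its minimum at an endpoint, giving $d_{\mathcal R}(u,v)\ge\min\{d_{\mathcal R}(u,a_i),d_{\mathcal R}(u,b_i)\}$ since $u_i(0)=a_i$ and $u_i(l_i)=b_i$.
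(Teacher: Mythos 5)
Your proof is correct and follows essentially the same route as the paper: a direct computation of the variogram sum in \eqref{e:def-dR} (of which only the terms indexed by $0$, $j$, and $i$ survive), followed by concavity in $t$ to deduce (C). The only cosmetic difference is that you justify $A_i\le 0$ via the parallel-resistor argument, whereas the paper derives it from Theorem~\ref{t:res-m}(C); both are valid, and your write-up in fact fills in the ``straightforward calculation'' that the paper leaves implicit.
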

\begin{proof} Since $l_i=d_{\mathcal G}(a_i,b_i)=d_V(a_i,b_i)$, Theorem~\ref{t:res-m}(C) gives that $A_i\le0$ with equality if and only if $L_i$ is the only path connecting $a_i$ and $b_i$. 
From \eqref{e:d-V} and \eqref{e:def-dR} we obtain \eqref{e:d-R1} and \eqref{e:d-R2} by a straightforward calculation, and thereby we easily see that $d_{\mathcal{R}}(u,v)$ as a function of $t$ behaves as stated in (A) and (B). 
Finally, since $d_{\mathcal{R}}(u,v)$ is a concave function of $t\in[0,l_i]$ in the case $i\not=j$, 
the inequality \eqref{e:min} follows.
\end{proof}


It follows from \eqref{e:d-R1} and \eqref{e:d-R2} that once $\Sigma=\Delta^{-1}$ has been calculated, $d_{\mathcal{R}}(u,v)$ can be quickly calculated for any $u,v\in L$. 
For example, 
the Chicago street network in Figure~\ref{fig:data} has 338 vertices, and using standard methods for inversion of the $338\times 338$ matrix $\Delta$ took less than a 0.1 second.
The inequality \eqref{e:min} becomes useful when searching for point pairs $u,v\in L$ with $d_{\mathcal{R}}(u,v)\le r$ and $v\in L_i$, since we need only to consider the cases where $d_{\mathcal{R}}(u,a_i)\le r$ or $d_{\mathcal{R}}(u,b_i)\le r$.

\section{Gaussian processes and isotropic covariance functions}\label{s:GP-cov-fct}

Let
$Y=\{Y(u)\,|\,u\in L\}$ be a Gaussian process (GP) 
where each $Y(u)$ is a real-valued random variable. The distribution of $Y$ is specified by the mean function $\mu(u)=\mathbb EY(u)$ and the covariance function 
\[c(u,v)=\mathbb C{\mathrm{ov}}(Y(u),Y(v))=\mathbb E[Y(u) 
{Y(v)}]-\mu(u) 
{\mu(v)}.\] 
The necessary and sufficient condition for a well-defined GP in terms of such two functions $\mu$ and $c$ is just that $c$ is symmetric  
and positive definite.

\subsection{Isotropic covariance functions}\label{s:iso-cov-fct}

We are in particular interested in isotropic covariance functions 
$c(u,v)=c_0(d(u,v))$ for all $u,v\in L$, where 
 with some abuse of terminology we also call $c_0$ a covariance function. So $c_0$ is required to be positive definite, that is, $\sum_{j,\ell=1}^n a_j{a_\ell}c_0(d(u_j,u_\ell))\ge0$ for all $a_1,\ldots,a_n\in\mathbb R$, all pairwise distinct $u_1,\ldots,u_n\in S$, and all $n=1,2,\ldots$. 
 
Henceforth, we assume that the variance $\sigma^2=c_0(0)$ is strictly positive, and
 pay attention to the correlation function $r_0(t)=c_0(t)/\sigma^2$.
 Many of the commonly used isotropic correlation functions, including those in Table~\ref{tab:covariances-sphere}, 
are valid with respect to the resistance metric but not always with respect to the geodesic metric. The reason for this is discussed 
in this section. 
In Table~\ref{tab:covariances-sphere}, for comparison we consider isotropic correlation functions defined on other metric spaces $(S,d)$, 
 where $d=\|\cdot\|$ is the usual Euclidean metric if $S=\mathbb R^k$, and where $d$ is the geodesic distance if 
 $S=\mathbb S^k$  is the $k$-dimensional unit sphere ($\mathbb S^k=\{x\in \mathbb R^{k+1}\,|\,\|x\|=1\}$).

\begin{table}
    \centering
    \begin{tabular}{l||l|l}
        Model & Correlation function $r_0(t)$ & Range of shape and smoothness parameters\\
        \hline
        \hline
        Powered exponential & $\exp\left(-t^\alpha/\phi\right)$ & $\alpha\in(0,2]$ if $S=\mathbb R^k$; $\alpha\in(0,1]$ if $S\in\{\mathbb S^k,L\}$\\
        Mat{\'e}rn & $\frac{2^{1-\alpha}}{\Gamma(\alpha)}\left( \sqrt{2\alpha}\frac{t}{\phi} \right)^{\alpha} K_{\alpha}\left(\sqrt{2\alpha}\frac{t}{\phi}\right)$ &  $\alpha>0$  if $S=\mathbb R^k$; $0< \alpha\leq \frac{1}{2}$ if $S\in\{\mathbb S^k,L\}$\\
        Generalized Cauchy & $(1 + (\frac{t}{\phi})^{\alpha})^{-\tau/\alpha}$ & $\tau > 0$; $\alpha \in (0,2]$ if $S=\mathbb R^k$; $\alpha \in (0,1]$ if $S\in\{\mathbb S^k,L\}$\\
        Dagum & $1 - ((\frac{t}{\phi})^{\tau}/(1+(\frac{t}{\phi})^{\tau}))^{\frac{\alpha}{\tau}}$ & $\tau \in (0,2]$ and $\alpha \in (0,\tau)$ if $S=\mathbb R^k$; \\
        & & $\tau \in (0,1]$ and $\alpha \in (0,1]$ if $S\in\{\mathbb S^k,L\}$
    \end{tabular}
    \caption{Four parametric models for an isotropic correlation function $r_0(r)$. Here, $\Gamma$ is the gamma function, $K_{\nu}$ is the modified Bessel function of the second kind, $\phi$ is a scale parameter, $\tau$ is a 
shape parameter, and $\alpha$ is a smooth\-ness parameter. The correlation functions are well-defined at all scales $\phi>0$ but the range of shape and smoothness parameters depend on the model and the space $S$. For $S\in\{\mathbb R^k,\mathbb S^k\}$, the correlation functions are well-defined for every dimension $k=1,2,\ldots$. For $S=L$, conditions on $L$ may be needed if distance is not measured by the resistance but the geodesic metric, see Section~\ref{s:iso-cov-fct}.
    } 
    \label{tab:covariances-sphere}
\end{table}



Typically (including all of our examples), $r_0$ will 
 be a completely monotone function. Recall that 
a function $f : [0,\infty)\mapsto\mathbb R$ is completely monotonic if it is non-negative and continuous on $[0,\infty)$ and for $j=0,1,\ldots$ and all $u>0$, the $j$-th derivative $f^{(j)}(u)$ exists and satisfies $(-1)^jf^{(j)}(u)\ge0$. By Bernstein's theorem, $f$ is completely monotone if and only if it is the Laplace transform of a non-negative finite measure on $[0,\infty)$, meaning that for every $t\ge0$,
\begin{equation}\label{e:bernstein}
f(t)=f(0)\int\exp(-st)\,\mathrm dF(s)
\end{equation}
where $F$ is a cumulative distribution function with $F(s)=0$ for $s<0$. We refer to $F$ as the Bernstein CDF corresponding to $f$. 

Thus, any non-negative valued distribution with a known Laplace transform can be used to produce a completely monotone function. This fact is used in the following example. 

\begin{ex}\label{ex:0} 
The following functions $f_1,f_2,f_3$ are completely monotone functions with $f_1(0)=f_2(0)=f_3(0)=1$ and they have corresponding Bernstein CDFs $F_1,F_2,F_3$ defined as follows.
For $\tau>0$, $\phi>0$, and $t\ge0$,
\begin{equation}\label{e:f_1}
f_1(t)=(1+t/\phi)^{-\tau},\quad F_1\sim\Gamma(\tau,\phi),
\end{equation}
where $\Gamma(\tau,\phi)$ denotes the gamma distribution with shape parameter $\tau$ and rate parameter $\phi$, and
\begin{equation}\label{e:f_2}
f_2(t)={2(t\phi)^{\tau/2}}K_\tau(2\sqrt{t\phi})/{\Gamma(\tau)},\quad F_2\sim\Gamma^{-1}(\tau,\phi),
\end{equation}
where $\Gamma^{-1}(\tau,\phi)$ denotes the inverse gamma distribution with shape parameter $\tau$ and scale parameter $\phi$.
Moreover, for $\psi>0$, $\chi>0$, $\lambda\in\mathbb R$, and $t\ge0$,
\begin{equation}\label{e:f_3}
f_3(t)= (1+2t/\psi)^{-\lambda/2}{K_\lambda(\sqrt{(2t+\psi)\chi})}/{K_\lambda(\sqrt{\psi\chi})}
\end{equation}
and $F_3$ is the CDF for a generalized inverse Gaussian distribution with probability density function
\[
\frac{(\psi/\chi)^{\lambda/2}} {2K_\lambda(\sqrt{\psi\chi})} s^{\lambda-1}\exp(-s \psi/2 -\chi/(2s)),\quad s\ge 0.
\]
\end{ex}

In the next theorem, which summarises Theorems 1 and 2 in \citet[][]{AnderesEtAl}, we need the following definition. We say that $L$ is a 1-sum of $\mathcal L_1=L_1\cup\ldots\cup L_j$ and $\mathcal L_2=L_{j+1}\cup\ldots\cup L_m$ if $\mathcal L_1$ and $\mathcal L_2$ are (connected) linear networks where $1\le j<m$, $\mathcal L_1\cap\mathcal L_2=\{u_0\}$ consists of a single point $u_0$, and 
\[d(u,v)=d(u,u_0)+d(v,u_0)\quad\mbox{whenever $u\in\mathcal L_1$ and $v\in\mathcal L_2$.}\]
This property is possible if $d=d_{\mathcal G}$ or $d=d_{\mathcal R}$ but unless $L$ is a straight line segment it is impossible if $d$ is given by the usual Euclidean distance. Using induction we say for $n=3,4,\ldots$ that $L=\mathcal L_1\cup\ldots\cup\mathcal L_n$ is a 1-sum of  $\mathcal L_1,\ldots,\mathcal L_n$ if $L$ is a 1-sum of $\mathcal L_1\cup\ldots\cup\mathcal L_{n-1}$ and $\mathcal L_n$. 


\begin{thm}\label{t:1}
Let $f : [0, \infty)\mapsto\mathbb R$ be a completely monotone and non-constant function. Then
 $f(d_{\mathcal R}(u, v))$ is strictly
positive definite over $(u, v)\in L\times L$. Moreover, if $L$ is a 1-sum of trees and loops, then $f(d_{\mathcal G}(u, v))$ is strictly
positive definite over $(u, v)\in L\times L$. However,
if there are three distinct paths between two points on $L$, then there exists a constant $\phi>0$ so that  $\exp(-d_{\mathcal G}(u, v)/\phi)$ is not positive definite over $(u, v)\in L\times L$. 
\end{thm}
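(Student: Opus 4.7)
The plan is to handle the three claims in turn, using Bernstein's theorem \eqref{e:bernstein} to reduce each to a statement about the pure exponentials $\exp(-s\, d(u,v))$ with $s>0$, and then integrating back against the Bernstein CDF $F$ of $f$.

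For the first claim, I would leverage the construction of $d_{\mathcal R}$ in \eqref{e:def-dR}. That construction literally exhibits $d_{\mathcal R}$ as a variogram: if the $Z_i$ are taken independent, then $Z = \sum_{i=0}^{m} Z_i$ is a zero mean Gaussian process on $L$ with $\mathbb{V}{\mathrm{ar}}(Z(u) - Z(v)) = d_{\mathcal R}(u,v)$. Since any variogram is conditionally negative definite, a classical Schoenberg--Kolmogorov argument shows that $\exp(-s\, d_{\mathcal R}(u,v))$ is positive definite on $L \times L$ for every $s \ge 0$. Writing $f(t) = f(0)\int_0^\infty \exp(-st)\,\mathrm dF(s)$ and integrating, $f(d_{\mathcal R}(\cdot,\cdot))$ is positive definite. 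To upgrade to strict positive definiteness, I would observe that if $f$ is non-constant, then $F$ assigns positive mass to some set in $(0,\infty)$; it then suffices to check that $\exp(-s\, d_{\mathcal R})$ is strictly positive definite for every $s>0$, which follows because the Gaussian vector $(Z(u_1),\ldots,Z(u_n))$ has a nonsingular covariance matrix for pairwise distinct $u_1,\ldots,u_n \in L$ (one verifies this directly from the explicit form of $Z_0$ and the independent Brownian bridges $B_i$).

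For the second claim I would treat the three building blocks separately. For a tree, Theorem~\ref{t:res-m}(C) gives $d_{\mathcal G} = d_{\mathcal R}$, reducing the statement to the first claim. For a loop, $L$ is isometric to a circle of circumference $|L|$ with its arc-length distance, which is the geodesic metric on $\mathbb S^1$; Schoenberg's characterization for $\mathbb S^1$ (using the cosine expansion recalled in Section~\ref{s:classical}) guarantees that every completely monotone function of geodesic distance is strictly positive definite on a circle. For the 1-sum step, given $L = \mathcal L_1 \cup \mathcal L_2$ meeting only at $u_0$, the identity $d_{\mathcal G}(u,v) = d_{\mathcal G}(u,u_0) + d_{\mathcal G}(v,u_0)$ across components forces the factorisation $\exp(-s\, d_{\mathcal G}(u,v)) = \exp(-s\, d_{\mathcal G}(u,u_0)) \exp(-s\, d_{\mathcal G}(v,u_0))$, which is exactly the screening relation satisfied by a Gaussian process that is Markov at $u_0$. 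Concretely, given positive definite kernels $k_s^{(1)}, k_s^{(2)}$ on $\mathcal L_1, \mathcal L_2$ obtained inductively, I would amalgamate independent copies conditional on a common Gaussian value at $u_0$ (with the prescribed variance $1$) to obtain a positive definite kernel on $L$ whose value is $\exp(-s\, d_{\mathcal G}(u,v))$ everywhere. Iterating over the 1-sum decomposition and integrating against $F$ completes the argument.

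For the third claim I would build an explicit counterexample on the relevant subgraph. Fix two points $u,v \in L$ joined by three internally disjoint paths of lengths $\ell_1 \le \ell_2 \le \ell_3$. Place one auxiliary point $w_i$ near the midpoint of path $i$, and consider the five-point configuration $\{u,v,w_1,w_2,w_3\}$. The geodesic distances among these five points are fully determined by the $\ell_i$, and the triangle-inequality-forcing structure of having three parallel paths produces a highly symmetric distance matrix that fails to embed isometrically in $\ell_2$ after applying $t \mapsto \sqrt{1-e^{-t/\phi}}$ for suitable $\phi$. Choosing coefficients $a_u = a_v = \alpha$ and $a_{w_i} = \beta$ with $\alpha,\beta$ of opposite signs and matching magnitudes, one writes the quadratic form $Q(\phi) = \sum_{j,\ell} a_j a_\ell \exp(-d_{\mathcal G}(u_j,u_\ell)/\phi)$ explicitly and shows that $Q(\phi) < 0$ for an appropriate $\phi$ (typically $\phi$ of the same order as the $\ell_i$), violating positive definiteness.

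The main obstacle is the loop case in the second claim. Unlike the resistance case, $d_{\mathcal G}$ on a loop is not itself a variogram; by Theorem~\ref{t:res-m}(D) it differs from $d_{\mathcal R}$ by an additive quadratic term, so the direct Bernstein--Schoenberg path used for claim~1 cannot be applied to $d_{\mathcal G}$ on a loop. The argument there genuinely relies on the harmonic structure of the circle (the cosine series from Section~\ref{s:classical}). The other delicate point is verifying in claim~3 that the constructed configuration admits coefficients making $Q(\phi)$ strictly negative for \emph{some} $\phi$, as opposed to merely failing some other condition; this requires a careful algebraic optimisation rather than a soft argument.
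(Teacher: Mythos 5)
First, note that the paper does not prove this theorem at all: it is stated explicitly as a summary of Theorems~1 and~2 of \citet{AnderesEtAl}, so there is no in-paper proof to compare against. Your reconstruction does follow the architecture of the cited proof: the Bernstein reduction to pure exponentials, the realisation of $d_{\mathcal R}$ as the variogram of the Gaussian process $Z=\sum_{i=0}^m Z_i$ built from \eqref{e:def-dR} followed by a Schoenberg argument, the reduction of the tree case to the resistance case via Theorem~\ref{t:res-m}(C), the Fourier/Schoenberg treatment of the loop, the Markov amalgamation at the cut point $u_0$ for the 1-sum, and a small explicit configuration on a theta-shaped subgraph for the negative result. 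Two points deserve tightening. For strict positive definiteness of $\exp(-s\,d_{\mathcal R})$ the natural argument is not nonsingularity of the covariance matrix of $(Z(u_1),\ldots,Z(u_n))$ but the injectivity of the embedding $u\mapsto Z(u)$ into $L^2(\Omega)$ (which holds because $d_{\mathcal R}(u,v)=\|Z(u)-Z(v)\|_{L^2}^2>0$ for $u\ne v$) combined with strict positive definiteness of the Gaussian radial kernel on any Hilbert space; you should also record that non-constancy of $f$ forces $F$ to put positive mass on $(0,\infty)$, so the strictly positive definite part survives the integration. Similarly, strictness of the amalgamated kernel across a 1-sum needs an explicit argument (e.g.\ that $\sum a_iY(u_i)=0$ a.s.\ forces all $a_i=0$ via conditional independence given $Y(u_0)$), which you assert but do not supply.

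The genuine gap is in the third claim. You describe a five-point configuration and a choice of signed coefficients, but you stop short of the computation that makes the argument a proof, and you say so yourself. For the record, the configuration does work in the symmetric case: with three internally disjoint paths of common length $\ell$, midpoints $w_1,w_2,w_3$, and $x=\mathrm{e}^{-\ell/(2\phi)}$, taking $a_u=a_v=1$ and $a_{w_i}=-2x/(1+2x^2)$ yields the quadratic form value $2(1-x^2)(1-2x^2)/(1+2x^2)$, which is negative precisely when $x^2>1/2$, i.e.\ for all $\phi>\ell/\ln 2$ (so the bad scales are the \emph{large} ones, not those ``of the same order as the $\ell_i$''). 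But the theorem only assumes three distinct paths of possibly very different lengths $\ell_1\le\ell_2\le\ell_3$, and then the midpoint of the longest path need not be at geodesic distance $\ell_3/2$ from $u$ and $v$ (the shortest route may detour through a shorter path), so the distance matrix loses the symmetry your argument relies on. Handling this requires either choosing the auxiliary points and the pair $(u,v)$ more carefully on the given subgraph, or a perturbation/reduction argument; without it the third claim is not established in the stated generality.
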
 

In Table~\ref{tab:covariances-sphere}, for
 each model, $r_0$ is completely monotone for the ranges of the parameters \citep[cf.\ the comments to Theorem~1 in][]{AnderesEtAl}. 
So for each example of $r_0$ in Table~\ref{tab:covariances-sphere} and for $r_0$ given by $f_1$, $f_2$, or $f_3$ in \eqref{e:f_1}--\eqref{e:f_3}, $r_0(d_{\mathcal R}(u, v))$ is a valid correlation function, but by Theorem~\ref{t:1} 
we only know that
$r_0(d_{\mathcal G}(u, v))$ is 
 valid if $L$ is a 1-sum of trees and loops. 
 In Table~\ref{tab:covariances-sphere}, the ranges of the parameters are the same for the two cases $S=\mathbb S^k$ and $S=L$ (in agreement with that $\mathbb S^1$ is isomorphic to $L$ if $L$ is  a loop). Finally, \eqref{e:f_1} is the special case of the generalized Cauchy function when $\alpha=1$ in Table~\ref{tab:covariances-sphere}, whilst \eqref{e:f_2} and \eqref{e:f_3} are not covered by Table~\ref{tab:covariances-sphere}. 
 
For example, consider the Chicago street network in the left panel of Figure~\ref{fig:data}. Here $L$ is not a 1-sum of trees and loops and therefore we cannot use the geodesic metric for the cases of covariance functions related to the Chicago street network. See also the counter examples in \citet[][Section~5]{AnderesEtAl}. On the other hand, the dendrite data shown in the right panel of Figure~\ref{fig:data} is observed on a tree, so here we can use the geodesic/resistance metric (by Theorem~\ref{t:1}(C) the two metrics are equal in this case).

\subsection{Simulation of GPs on linear networks}\label{s:sim-GP} 

This section discusses how to simulate a GP $Y=\{Y(u)\,|\,u\in L\}$ 
using three different algorithms, which are all available in our package \texttt{coxln}.
We assume without loss of generality that the mean function of $Y$ is zero.

The following is a straightforward algorithm applicable to any metric $d$ and any linear network $L$.

\begin{alg} Select a finite subset $D\subset L$ and make the following steps.
	\begin{itemize}\label{a:1}
		\item Simulate $Y$ restricted to $D$, e.g.\ by using  eigenvalue decomposition of the corresponding covariance matrix $\Sigma_D$.
		\item For $u\in L\setminus D$, approximate $Y(u)$ by the average of those $Y(v)$ where $v\in D$ is closest to $u$  with respect to $d_{\mathcal G}$.
	\end{itemize}
\end{alg}

Specifically, we have chosen a grid $D=V\cup D_1\cup…\cup D_m$ where each $D_i$ is a fine discretization of $L_i$ as described after the proof of Theorem~\ref{t:simalgotree} below.
The disadvantage of Algorithm~\ref{a:1} is of course that the dimension of $\Sigma_D$ can be large and hence eigenvalue decomposition (as well as other methods) can be slow. Algorithm~\ref{a:2} below is much faster but requires $L$ to be a tree and $c(u,v) = \sigma^2\exp(-s d(u, v))$ to be an exponential covariance function with parameter $s>0$ and $d=d_{\mathcal G}=d_{\mathcal R}$ (the exponential correlation function $r_0(t)=\exp(-ts)$ appears as two special cases in Table~\ref{tab:covariances-sphere} with scale parameter $\phi=1/s$, namely the powered exponential model with $\alpha=1$ and the M{\'a}tern model with $\alpha=\tfrac12$). But we first need to establish a Markov property given in the following theorem, where we denote the shortest path between $u,v\in L$ by $p_{uv}$.

\begin{thm}\label{t:markov}
	Suppose that $Y$ is a GP on a tree $L$ with exponential covariance function $c(u,v) = \sigma^2\exp(-s d(u, v))$ where $\sigma>0$, $s>0$, and $d=d_{\mathcal G}=d_{\mathcal R}$. 
	For $n=1,2,...$ and every pairwise distinct points $u,v,w_1,\ldots,w_n\in L$ so that $w_i\in p_{uv}$ for at least one $w_i$, we have that $Y(u)$ and $Y(v)$ are conditionally independent given $Y(w_1),\ldots,Y(w_n)$.
\end{thm}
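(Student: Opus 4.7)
The plan is to exploit two features of the setting: on a tree the unique path property gives the additive identity $d(u,v)=d(u,w)+d(w,v)$ whenever $w\in p_{uv}$, and under the exponential form this translates into the multiplicative factorization $c(u,v)=c(u,w)c(v,w)/\sigma^2$. Together these make the one-step Markov property almost immediate, after which the general statement follows from the tree topology.

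First I would handle the base case $n=1$ with $w\in p_{uv}$. Using the standard conditional covariance formula for the jointly Gaussian triple $(Y(u),Y(v),Y(w))$,
\[\mathbb C{\mathrm{ov}}(Y(u),Y(v)\mid Y(w))=c(u,v)-\frac{c(u,w)c(v,w)}{c(w,w)}=\sigma^2\exp(-s d(u,v))-\sigma^2\exp(-s[d(u,w)+d(v,w)])=0,\]
and zero covariance between jointly Gaussian variables yields independence, settling $n=1$.

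Next I would bootstrap to arbitrary $n$ using the tree structure. Assume without loss of generality that $w_1\in p_{uv}$. Since $L$ is a tree, $L\setminus\{w_1\}$ splits into disjoint (arc-connected) components; let $C_u$ be the one containing $u$. For any $u'\in C_u\cup\{w_1\}$ and any $v'$ lying outside $C_u\cup\{w_1\}$, the unique path between them must traverse $w_1$, so $w_1\in p_{u'v'}$ and the base case gives $\mathbb C{\mathrm{ov}}(Y(u'),Y(v')\mid Y(w_1))=0$. Partition $\{w_2,\ldots,w_n\}$ accordingly into $W_A\subseteq C_u\cup\{w_1\}$ and $W_B$ (the complement), and consider the Gaussian vectors $A=(Y(u),Y(w):w\in W_A)$ and $B=(Y(v),Y(w):w\in W_B)$. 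All cross covariances between entries of $A$ and entries of $B$ vanish given $Y(w_1)$ (entries equal to $Y(w_1)$ contribute a constant, hence trivially zero covariance), and since $(A,B)$ is jointly Gaussian conditional on $Y(w_1)$, $A$ and $B$ are conditionally independent given $Y(w_1)$.

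Finally I would pass from this block independence to the stated conclusion using the elementary fact (the weak union graphoid axiom for conditional probability densities, easily checked for Gaussian densities) that $A\perp B\mid Y(w_1)$ implies $Y(u)\perp Y(v)\mid Y(w_1),(Y(w):w\in W_A\cup W_B)$, which is precisely the claim. The main obstacle is not any single computation but the bookkeeping in the tree-splitting step — in particular, handling the case where several $w_i$ might lie on $p_{uv}$ (the argument above only requires one such $w_i$, but one must check that the partition into components is well defined and that the base case applies uniformly across the two sides).
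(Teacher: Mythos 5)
Your proof is correct and follows essentially the same route as the paper's: the base case $n=1$ via the additivity $d(u,v)=d(u,w)+d(w,v)$ and the resulting vanishing Gaussian conditional covariance (the paper checks the zero entry of the precision matrix instead, which is equivalent), and then the extension to general $n$ by sorting the remaining $w_i$ according to which component of $L\setminus\{w_1\}$ they fall in. Your two-block partition merely makes explicit, uniformly in $n$, the step the paper carries out for $n=2$ and dispatches with ``in a similar way'' for $n\ge3$.
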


\begin{proof}
	Let $n=1$ and $w=w_1$.
	Since $w\in p_{uv}$ and $L$ is a tree, $d(u,v) = d(u,w) + d(w,v)$ and therefore $c(u,v) = c(u,w)c(w,v)/\sigma^2$. Thus the covariance matrix for $(u,v,w)$ has the form 
	\begin{equation*}
	\Sigma_{u,v,w} = 
	\begin{pmatrix}
	\sigma^2 & c(u,w)c(w,v)/\sigma^2 & c(u,w) \\
	c(u,w)c(w,v)/\sigma^2 & \sigma^2 & c(w,v) \\
	c(u,w) & c(w,v) & \sigma^2
	\end{pmatrix}.
	\end{equation*}
	Inverting the covariance matrix, we get that the corresponding precision matrix has 0 at entries $(1,2)$ and $(2,1)$, thus implying that $Y(u)$ and $Y(v)$ are conditionally independent given $Y(w)$. 
	
	 Consider the case $n=2$ and e.g.\ $w_1\in p_{uv}$. Since $L$ is a tree, $Y(w_2)$ must be conditionally independent of either $Y(u)$ or $Y(v)$ given $Y(w_1)$. Assume without loss of generality that this is $Y(u)$. Thus, $Y(u)$ is conditionally independent of $(Y(v),Y(w_2))$ given $Y(w_1)$, which implies that $Y(u)$ and $Y(v)$ are conditionally independent given $(Y(w_1),Y(w_2))$. In a similar way we  verify the case with $n\ge3$. 
\end{proof}

We use the following notation in Algorithm~\ref{a:2}. Pick an arbitrary origin $u_0\in V$ and set $G_0(u_0)=\{u_0\}$. For $j=1,2,\ldots$, if $u\in V\setminus\cup_{i=0}^{j-1}G_i(u_0)$ and $u\sim v$ for some $v\in G_{j-1}(u_0)$, we call $u$ a child of $j$-th generation to $u_0$ and define $G_j(u_0)\subset V$ as the set of all children of $j$-th generation to $u_0$. 
Moreover, 
if $u\in G_{j-1}(u_0)$, $v\in G_j(u_0)$, and $u\sim v$, we define a GP $Y(u,v)=\{Y(w)\,|\,w\in(u,v]\}$ where $(u,v]$ is the half-open
line segment with endpoints $u$ and $v$ so that $u$ is excluded and $v$ is included. 

\begin{alg}\label{a:2}
Suppose that $L$ is a tree. Let $s>0$ and $\sigma>0$ be parameters, and pick an arbitrary vertex $u_0\in V$. Construct random variables $Y(w)$ for all $w\in L$ by the following steps.
	\begin{description}
		\item[(I)] For $w=u_0$, generate $Y(w)$ from $N(0,\sigma^2)$.
		\item[(II)] For $j=1,2,\ldots$, conditioned on all the $Y(w)$ so far generated,  generate independent GPs $Y(u,v)$ for all $u\in G_{j-1}(u_0)$ and all $v\in G_j(u_0)$ with $u\sim v$, where $Y(u,v)$ depends only on $Y(u)$ and for every $w,w_1,w_2\in(u,v]$ we have
		\begin{align}\label{e:M2}
		\mathbb E[Y(w)\,|\,Y(u)]&=\exp(-\|w-u\|s))Y(u)\\
		\label{e:M22}
		\mathbb C{\mathrm{ov}}[Y(w_1,w_2)\,|\,Y(u)]&=\sigma^2\left(\exp(-\|w_1-w_2\|s)-\exp(-\|w_1-u\|s-\|w_2-u\|s)\right).
		\end{align} 
		\item[(III)] Output $Y=\{Y(w)\,|\,w=u_0\mbox{ or }w\in (u,v]\mbox{ for some }j\in\mathbb N,\ u\in G_{j-1}(u_0),\ v\in G_j(u_0)\mbox{ with }u\sim v\}$.
	\end{description}
\end{alg}

\begin{thm}\label{t:simalgotree}
	Let the situation be as in Algorithm~\ref{a:2}.
	Then $Y$ is a zero mean GP on $L$ with exponential covariance function $c(u,v)=\sigma^2\exp(-s d(u, v))$. 
\end{thm}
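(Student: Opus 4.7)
The plan is to verify three things: (i) the construction yields a Gaussian process, (ii) each $Y(w)$ has mean $0$ and variance $\sigma^2$, and (iii) $\mathbb{C}\mathrm{ov}(Y(w_1),Y(w_2))=\sigma^2\exp(-s d(w_1,w_2))$ for any $w_1,w_2\in L$. Gaussianity at any finite set of points follows by induction on generations: conditional on the values already generated, each new piece $Y(u,v)$ is specified to be Gaussian with conditional mean linear in those values and with conditional covariance (\ref{e:M22}), so the joint distribution is Gaussian. Mean zero and variance $\sigma^2$ at $w\in(u,v]$ follow at once from the tower property together with (\ref{e:M2})--(\ref{e:M22}) applied to $Y(u)\sim N(0,\sigma^2)$, and then inductively for $w$ in any later generation.

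For the covariance, I would split into two cases. If $w_1,w_2$ lie on the same half-open segment $(u,v]$, then (\ref{e:M22}) and the tower property give
\[
\mathbb{C}\mathrm{ov}(Y(w_1),Y(w_2))=\mathbb{E}[\mathbb{C}\mathrm{ov}(Y(w_1),Y(w_2)\mid Y(u))]+\mathbb{C}\mathrm{ov}(\mathbb{E}[Y(w_1)\mid Y(u)],\mathbb{E}[Y(w_2)\mid Y(u)]),
\]
and the two terms combine to $\sigma^2\exp(-s\|w_1-w_2\|)$; since $w_1,w_2$ lie on the same edge of the tree, $\|w_1-w_2\|=d(w_1,w_2)$. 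The boundary case $w_1=u$ is immediate from the same calculation.

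For $w_1,w_2$ on different edges, I would exploit that $L$ is a tree: let $u^\star$ be the last common point on the unique paths from $u_0$ to $w_1$ and to $w_2$. Then $u^\star$ lies on the path $p_{w_1w_2}$ and $d(w_1,w_2)=d(w_1,u^\star)+d(u^\star,w_2)$. By the independence of the pieces $Y(u,v)$ generated on distinct child-edges in step (II), $Y(w_1)$ and $Y(w_2)$ are conditionally independent given the values generated up to and including $Y(u^\star)$, and in fact conditionally independent given $Y(u^\star)$ alone (since the two sub-trees past $u^\star$ depend on earlier values only through $Y(u^\star)$). The tower property then reduces the computation to
\[
\mathbb{C}\mathrm{ov}(Y(w_1),Y(w_2))=\mathbb{E}\!\left[\mathbb{E}[Y(w_1)\mid Y(u^\star)]\,\mathbb{E}[Y(w_2)\mid Y(u^\star)]\right].
\]

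The key sub-lemma needed is that whenever $u^\star$ lies on the path from $u_0$ to $w$, one has $\mathbb{E}[Y(w)\mid Y(u^\star)]=\exp(-s d(u^\star,w))Y(u^\star)$. I would prove this by induction on the number of edges separating $u^\star$ from $w$: one edge is exactly (\ref{e:M2}); the inductive step chains two conditional expectations of this form and uses additivity of distance along the path, i.e.\ $\exp(-s d(u^\star,w))=\exp(-s d(u^\star,v'))\exp(-s\|w-v'\|)$ where $v'$ is the last vertex on the path before $w$. Inserted into the displayed equation this yields $\sigma^2\exp(-s(d(w_1,u^\star)+d(u^\star,w_2)))=\sigma^2\exp(-s d(w_1,w_2))$, as required. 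The main obstacle is setting up this sub-induction cleanly and justifying that the generation-by-generation independence built into step (II) gives exactly the conditional independence of sub-trees needed at the branching point $u^\star$; once that is in place the rest is bookkeeping.
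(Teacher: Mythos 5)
Your argument is correct, but it runs in the opposite direction from the paper's. The paper starts from the \emph{target} process --- the GP with covariance $\sigma^2\exp(-sd(u,v))$ --- and uses Theorem~\ref{t:markov} to show that this target has exactly the conditional independence structure that Algorithm~\ref{a:2} builds in (points on distinct child-edges are conditionally independent given the already-generated values, and the law of $(Y(w_1),Y(w_2))$ on a single edge given $Y(u)$ is the bivariate normal with moments \eqref{e:M2}--\eqref{e:M22}); the induction over generations then simply matches the algorithm's conditional laws against those of the target. You instead compute \emph{forward} what the algorithm actually produces: Gaussianity and the first two moments by induction, the within-edge covariance by the law of total covariance, and the cross-edge covariance by locating the branching point $u^\star$, invoking conditional independence of the two subtrees given $Y(u^\star)$, and chaining the conditional-expectation identity $\mathbb{E}[Y(w)\mid Y(u^\star)]=\exp(-sd(u^\star,w))Y(u^\star)$ along the path. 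This makes your proof self-contained --- it never needs Theorem~\ref{t:markov} --- at the cost of the path-chaining sub-lemma and a case analysis at $u^\star$ (in particular, when $u^\star$ is not a vertex it must coincide with one of $w_1,w_2$, and the chaining step needs the conditional independence of $Y(w)$ from $Y(u^\star)$ given the intermediate vertex, which again comes from the ``depends only on $Y(u)$'' clause of step (II)). The paper's route buys brevity by reusing an already-proved structural result; yours buys an explicit verification of the covariance that would survive even if the Markov theorem were unavailable.
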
 

\begin{proof}
	By considering the GPs associated to the successive generations to $u_0$, we prove by induction that 
	the iterative construction in Algorithm~\ref{a:2} makes $Y$ a GP as stated. 
	
	Clearly the distribution of $Y(u_0)$ is correctly generated, cf.\ (I). This is induction step $j=0$. 
	
	For induction step $j\ge1$, we condition on all the $Y(w)$ so far generated, i.e., $w$ is equal to either $u_0$ or $Y(w)$ is a member of a GP associated to two neighbouring vertices, with one being a child of $i$-th generation $i$ to $u_0$ and the other being a child of $(i-1)$-th generation $i$ to $u_0$ where $1\le i\le j$, cf.\ (I) and (II) in Algorithm~\ref{a:2} 
	(here we interpret $u_0$ as a child of $0$-th generation to $u_0$). 
	By the induction hypothesis, these $Y(w)$ have been correctly generated. If we take two points $w_1$ and $w_2$ contained in different line segments between points in $G_{j-1}(u_0)$ and $G_j(u_0)$, then by Theorem~\ref{t:markov}, $Y(w_1)$ and $Y(w_2)$ are (conditionally) independent, which is in accordance to our construction. So it suffices to consider the (conditional) distribution of $(Y(w_1),Y(w_2))$ when $w_1,w_2\in(u,v]$, $u\in G_{j-1}(u_0)$, $v\in G_j(u_0)$, and $u\sim v$. This (conditional) distribution depends only on $Y(u)$, cf.\ Theorem~\ref{t:markov}, and it is straightforwardly seen to be  
a bivariate normal distribution with mean and covariance matrix given by \eqref{e:M2} and \eqref{e:M22}, respectively.	
\end{proof}

In practice, when using Algorithm~\ref{a:2} for simulation, a discretization on each line segment is needed. For some integer $n_j>0$ (which may depend on $l_j$) and $i\in\{0,1,\ldots,{n_j}\}$, define $s_i=i l_j/{n_j}$. Then, for
each $u\in L_j$, if $u$ is the midpoint between $s_{i-1}$ and $s_i$, we approximate $Y(u)$ by the average $(Y(u_j(s_{i-1}))+Y(u_j(s_i)))/2$, and otherwise if $u$ is closest to $u_j(s_i)$, we approximate $Y(u)$ by 
$Y(u_j(s))$. 
For example, if $u$ is closest to $u_j(s_i)$, the approximation error is bounded in probability by
\begin{equation}\label{e:bound-prob}
\mathrm P(|Y(u)-Y(u_j(s_i))|\ge t)=2\left(1-\Phi\left(\frac{t}{2\sigma^2(1-\exp(-s\|s_i-u\|))}\right)\right)\le 2\left(1-\Phi\left(\frac{t}{2\sigma^2(1-\exp(-sl_j/(2n_j)))}\right)\right)
\end{equation}
for $t\ge0$ where $\Phi$ is the distribution function for the standard normal distribution.
(Similarly, in case of Algorithm~\ref{a:1}, where in \eqref{e:bound-prob} we replace the exponential correlation function by the correlation function of $Y$.)
Further,
to generate the ${n_j}-1$ normal variables $Y(u_j(s_i))$, $i=1,\ldots,{n_j}-1$, we start by generating the variable $Y(u_j(s_1))$ in accordance to \eqref{e:M2} and \eqref{e:M22} with $w=w_1=w_2=s_1$. Then we can add $u_j(s_1)$ to the vertex set, whereby we split $l_j$ into the two line segments given by this new vertex and the endpoints of $l_j$. Hence, if $n_j>1$, we can repeat the procedure when generating $Y(u_j(s_2))$, and so on until all the $n_j-1$ normal variables have been generated.  

Theorems~\ref{t:markov} and \ref{t:simalgotree} do not hold if $L$ is not a tree. Indeed, a GP on the circle $\mathbb S^1$ (which is equivalent to a loop of length $2\pi$) with exponential covariance function, considering four arbitrary distinct points on $\mathbb S^1$, it can be shown that the GP is not  
Markov by calculating the precision matrix of the GP at these four points. On the other hand, letting $c(u,v) = a \cosh(b(d_\mathcal{G}(u,v) -\pi))$ for $a,b>0$,
\cite{Pitt71} verified that the GP on $\mathbb S^1$ with covariance function $c$ is Markov, but considering two arbitrary points on a tree together with a point on the path connecting them, it can be shown that the GP on the tree with covariance function $c$ is not Markov.
Consequently, we cannot have a covariance function only depending on the geodesic distance
which, for an arbitrary linear network, makes a GP Markov. Moreover, 
 in general, if we want to simulate a GP with an exponential covariance function on a linear network which is not a tree, we cannot rely on Markov properties. Instead we have to use the straightforward but slower Algorithm~\ref{a:1}.
 
For other covariance functions than the exponential, 
we may use the following theorem, which follows immediately from \eqref{e:bernstein} and the central limit theorem.

\begin{thm}\label{t:3}
Suppose $d$ is a metric on $L$ so that $(u,v)\mapsto\exp(-sd(u,v))$ for $(u,v)\in L^2$ is a well-defined correlation function for all $s>0$, and let $Y$ be a zero mean GP on $L$ with
	covariance function 
	\begin{equation}\label{e:c-ber}
	c(u,v)=\sigma^2
	\int\exp(-sd(u,v))\,\mathrm dF(s)
	\end{equation}
	where $\sigma>0$ and $F$ is a CDF with $F(s)=0$ for $s<0$. For an integer $n>0$ and $i=1,\ldots,n$, suppose we generate first $S_i$ from $F$ and second $Y_i$ as a zero mean GP on $L$ with covariance function $\sigma^2\exp(-S_i d(u,v))$ so that $(S_1,Y_1),\ldots,(S_n,Y_n)$ are independent. Let $\bar Y_n=\sum_{i=1}^n Y_i/n$. Then $\sqrt n \bar Y_n$ is a zero mean stochastic process on $L$ with covariance function $c$. As $n\rightarrow\infty$, $\sqrt n \bar Y_n$ approximates $Y$ in the sense that any finite dimensional distribution of $\sqrt n \bar Y_n$ converges in distribution towards the corresponding finite dimensional distribution of $Y$.  
\end{thm}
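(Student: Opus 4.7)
The plan is to verify three things in turn: that $\sqrt{n}\bar Y_n$ has mean zero, that it has covariance function $c$ exactly (not just in the limit), and that its finite-dimensional distributions converge to those of $Y$. The first two are computations from conditioning, and the third is a direct application of the multivariate central limit theorem to i.i.d.\ vectors, so no genuine obstacle is expected; the only thing to watch is the mixing over $S_i$ when computing the second moments.

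First I would note that for each $i$, the process $Y_i$ is a scale mixture of Gaussian processes: conditional on $S_i$ it is a zero mean GP with covariance $\sigma^2\exp(-S_id(u,v))$, and $\exp(-sd(u,v))$ is a valid correlation function for every $s>0$ by hypothesis (so this conditional GP is well-defined). Unconditionally, $\mathbb E[Y_i(u)]=\mathbb E[\mathbb E[Y_i(u)\mid S_i]]=0$, and by the tower property together with \eqref{e:bernstein}--style reasoning,
\[
\mathbb E[Y_i(u)Y_i(v)]=\mathbb E\!\left[\sigma^2\exp(-S_id(u,v))\right]=\sigma^2\int\exp(-sd(u,v))\,\mathrm dF(s)=c(u,v),
\]
where the integrand is bounded by $1$, so $c(u,v)$ is finite (and symmetric positive definite since it is a mixture of such). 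Because the pairs $(S_i,Y_i)$ are independent, the processes $Y_1,\ldots,Y_n$ are i.i.d.\ zero mean processes with covariance $c$. Hence
\[
\mathbb C{\mathrm{ov}}(\sqrt n\bar Y_n(u),\sqrt n\bar Y_n(v))=\frac{1}{n}\sum_{i=1}^n\mathbb C{\mathrm{ov}}(Y_i(u),Y_i(v))=c(u,v),
\]
which gives the covariance claim exactly, for every finite $n$.

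For the finite-dimensional convergence, fix any $m\ge1$ and pairwise distinct $u_1,\ldots,u_m\in L$. The vectors $X_i=(Y_i(u_1),\ldots,Y_i(u_m))\in\mathbb R^m$ are i.i.d.\ (since the $(S_i,Y_i)$ are), have mean zero, and have covariance matrix $C$ with entries $C_{jk}=c(u_j,u_k)$, which is finite by the bound $|c|\le\sigma^2$. By the multivariate Lindeberg--L\'evy CLT, $\sqrt n\,\bar X_n=\sqrt n(\bar Y_n(u_1),\ldots,\bar Y_n(u_m))$ converges in distribution to $N(0,C)$, which is exactly the distribution of $(Y(u_1),\ldots,Y(u_m))$. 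Since this holds for every finite collection of points, the finite-dimensional distributions of $\sqrt n\bar Y_n$ converge to those of $Y$, completing the argument.
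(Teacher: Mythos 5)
Your proof is correct and follows exactly the route the paper intends: the paper gives no detailed argument, stating only that the theorem ``follows immediately from \eqref{e:bernstein} and the central limit theorem,'' and your write-up simply fills in those two steps (the tower-property computation of the exact mean and covariance of $\sqrt n\bar Y_n$, and the multivariate Lindeberg--L\'evy CLT applied to the i.i.d.\ vectors of finite-dimensional evaluations). Nothing is missing.
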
   

Theorem~\ref{t:3} allows simulation of any GP with a covariance function of the form \eqref{e:c-ber},
if a simulation algorithm for $F$ is available. For the case of a tree, Theorem~\ref{t:3} combined with Algorithm~\ref{a:2} gives the following simulation algorithm.

\begin{alg}\label{a:3}
Let the situation be as in Theorem~\ref{t:3} and suppose that $L$ is a tree and $d=d_{\mathcal R}=d_{\mathcal G}$. 
Choose an integer $n>0$ and independently for $i=1,...,n$ make the following steps. 
\begin{description}
		\item[(I)] Generate $S_i$ from $F$.
		\item[(II)] Using Algorithm~\ref{a:2} generate $Y_i$ as a zero mean GP on $L$ with covariance function $\sigma^2\exp(-S_i d(u,v))$.
\end{description}
Output 	$\sqrt n \bar Y_n$ as an approximate simulation of 	a zero mean GP with covariance function given by \eqref{e:c-ber}.
\end{alg}

Conditioned on $S_1,...,S_n$, the output in Algorithm~\ref{a:3} is an approximate simulation of a zero mean GP with covariance function
\begin{equation}\label{e:approxc}
c(u,v|S_1,...,S_n)=\frac{\sigma^2}{n}\sum_{i=1}^n\exp(-S_i d(u,v)).
\end{equation}	
The running times of Algorithms~\ref{a:1}--\ref{a:3} for simulating an approximate GP on a tree are compared in the following example,  and the  choice of $n$ in Algorithm~\ref{a:3} needed to obtain that \eqref{e:approxc} is a good approximation of the covariance function \eqref{e:c-ber} is considered in the example thereafter.

\begin{ex} 
Let $L$ be the dendrite tree in Figure~\ref{fig:data}.
Table~\ref{tab:algotimes} shows the times used in our implementation for obtaining a simulation of a GP with either an exponential covariance function or a covariance function with inverse gamma Bernstein CDF, cf.\ \eqref{e:f_2}. 
Specifically, $\tau=\phi=\sigma=1$ but the running times do not depend on this choice of the parameters. However,
the running times depend heavily on the number of points $\#D$ in the grid $D\subset L$, so we consider two different situations, one with $\#D=387$ (this choice is used later in Sections~\ref{s:den-an} and \ref{s:simstudy}) and another with $\#D=1863$. Since Algorithm~\ref{a:3} depends on the choice of $n$, various values of $n$ are also shown in Table~\ref{tab:algotimes}. The first two rows in the table show that Algorithm~\ref{a:2} is faster than Algorithm~\ref{a:1} for simulating a GP with exponential covariance function for both choices of grids, but the difference is far more clear for the fine grid. For the inverse gamma Bernstein CDF in the case $\#D=387$, the running time of Algorithm~\ref{a:1} is roughly the same as for Algorithm~\ref{a:3} with $n=20$, while in the case $\#D=1863$ Algorithm~\ref{a:3} is much faster than Algorithm~\ref{a:1} even with $n=200$. This illustrates that Algorithm~\ref{a:2} is faster than Algorithm~\ref{a:1}, while it depends on the choice of $\#D$ and $n$ whether Algorithm~\ref{a:1} or \ref{a:3} is fastest.

\begin{table}
	\centering
	\begin{tabular}{l|c||c|c}
		Covariance function & Algorithm & $\#D = 387$ & $\#D = 1863$ \\
		\hline\hline
 		Exponential & Algorithm~\ref{a:1} & 0.129 s & 13.7 s \\
 		Exponential & Algorithm~\ref{a:2} & 0.0118 s &   0.0166 s \\
 		Inverse gamma Bernstein CDF & Algorithm~\ref{a:1} & 0.157 s & 14.5 s\\
 		Inverse gamma Bernstein CDF & Algorithm~\ref{a:3} (n=20) & 0.162 s & 0.255 s\\
 		Inverse gamma Bernstein CDF & Algorithm~\ref{a:3} (n=50) & 0.400 s & 0.638 s\\
 		Inverse gamma Bernstein CDF & Algorithm~\ref{a:3} (n=200) &  1.61 s & 2.48 s\\
	\end{tabular}
	\caption{Running times of Algorithms~\ref{a:1}--\ref{a:3} used for simulating a GP on two different grids on a tree.} 
	\label{tab:algotimes}
\end{table}
\end{ex}

\begin{ex} The choice of $n$ in Algorithm~\ref{a:3} leading to a good approximation depends on the choice of covariance function, so in 
continuation of the previous example, we consider two inverse gamma Bernstein CDFs with parameters 
$(\tau,\phi)=(1,1)$ and $(\tau,\phi)=(5,5)$. For these choices of parameters and for $n=20,50,200$, 
Figure~\ref{fig:algo3} shows the correlation functions along with 100 approximations given by \eqref{e:approxc}. For $n=20$ the approximations show a lot of variability around the true correlation functions, while for $n=200$ the approximations are much closer to the true correlation functions. Also it is evident in the figure that a higher $n$ is required to obtain a good approximation for $(\tau,\phi)=(1,1)$ than for $(\tau,\phi)=(5,5)$. This is expected since the variance of $\Gamma^{-1}(\tau,\phi)$ is infinite if and only if $0<\tau\le2$.

\begin{figure}
	\begin{center}
		\includegraphics[width=6cm]{"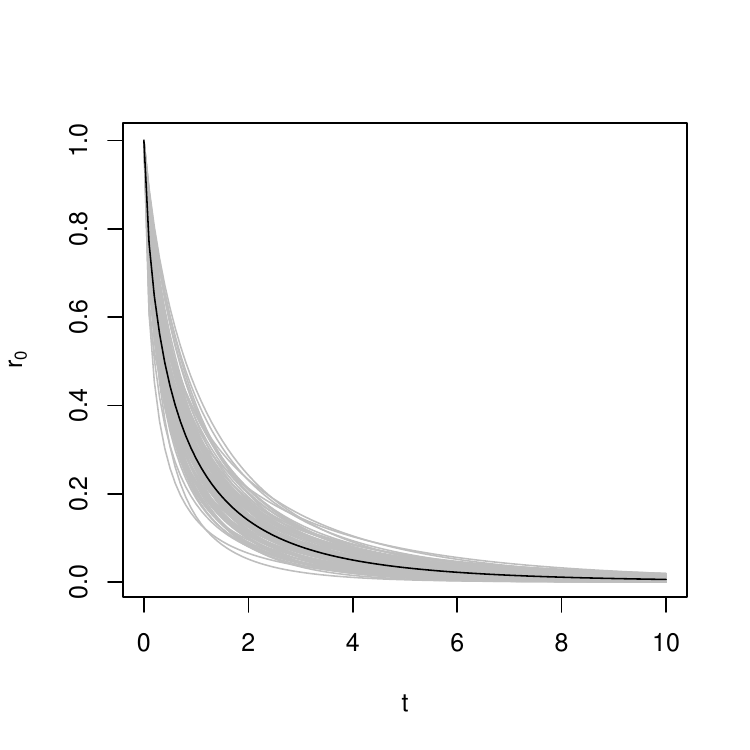"}
		\includegraphics[width=6cm]{"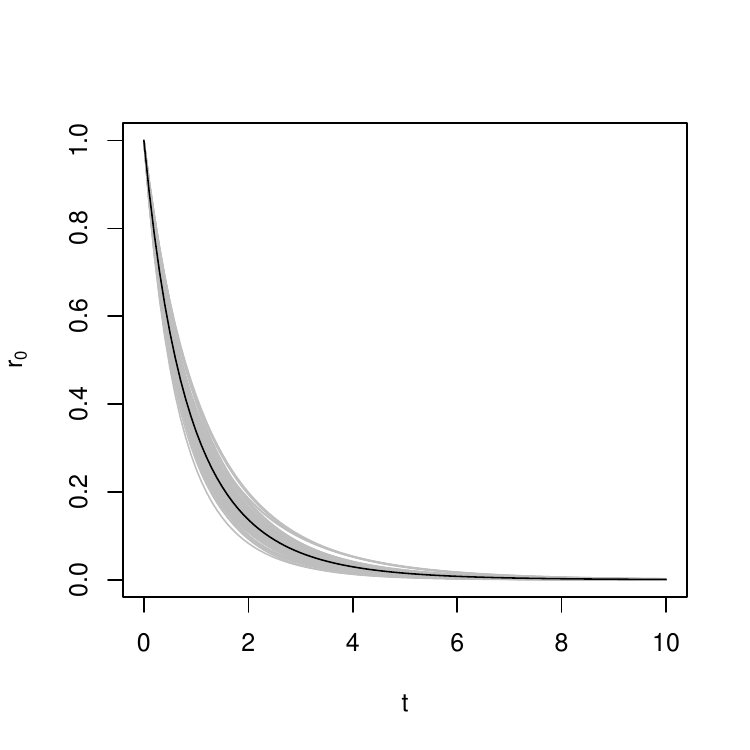"}\\
		\includegraphics[width=6cm]{"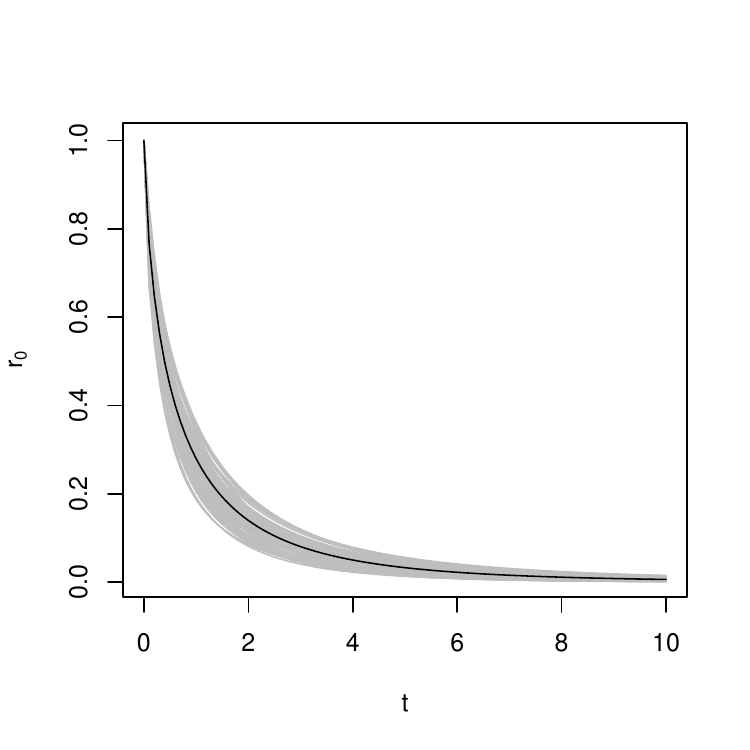"}
		\includegraphics[width=6cm]{"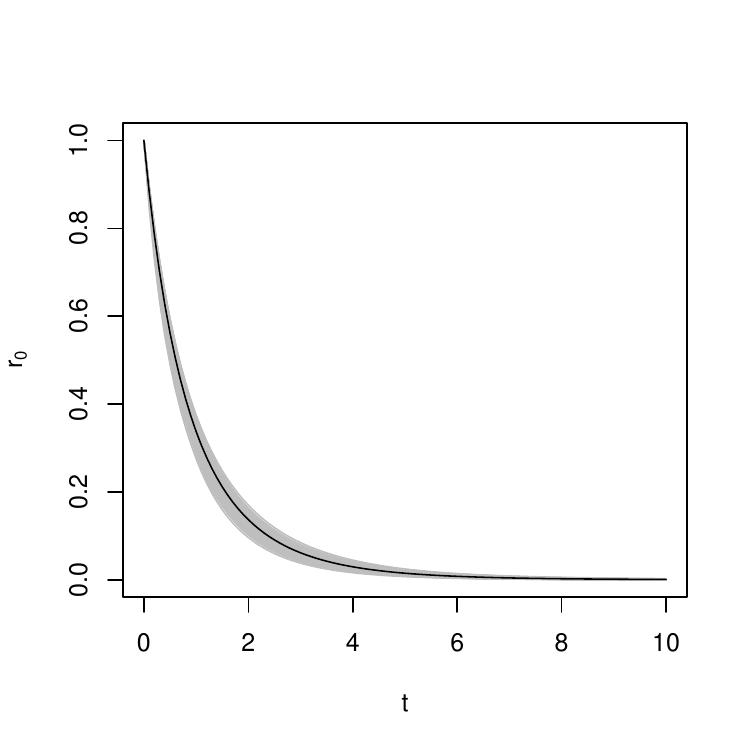"}\\
		\includegraphics[width=6cm]{"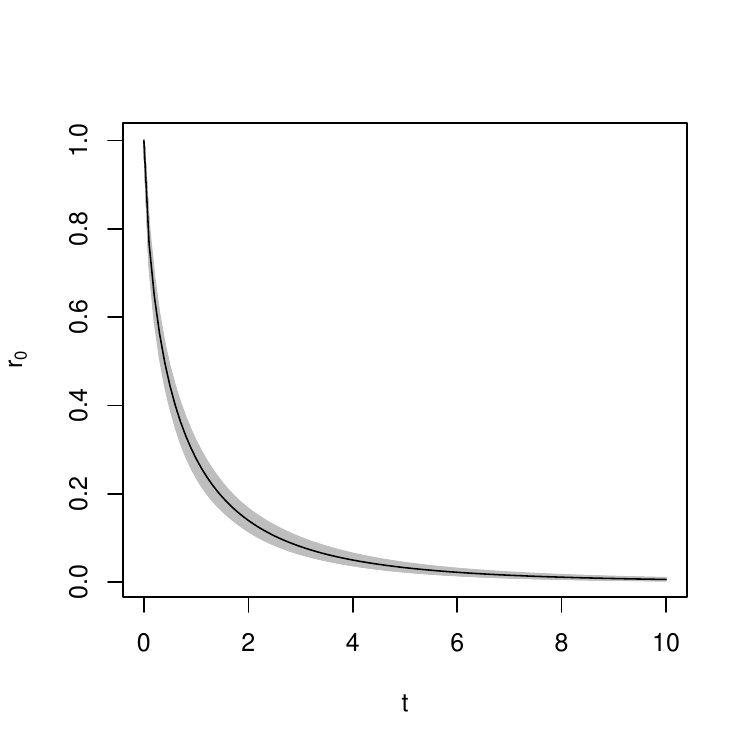"}
		\includegraphics[width=6cm]{"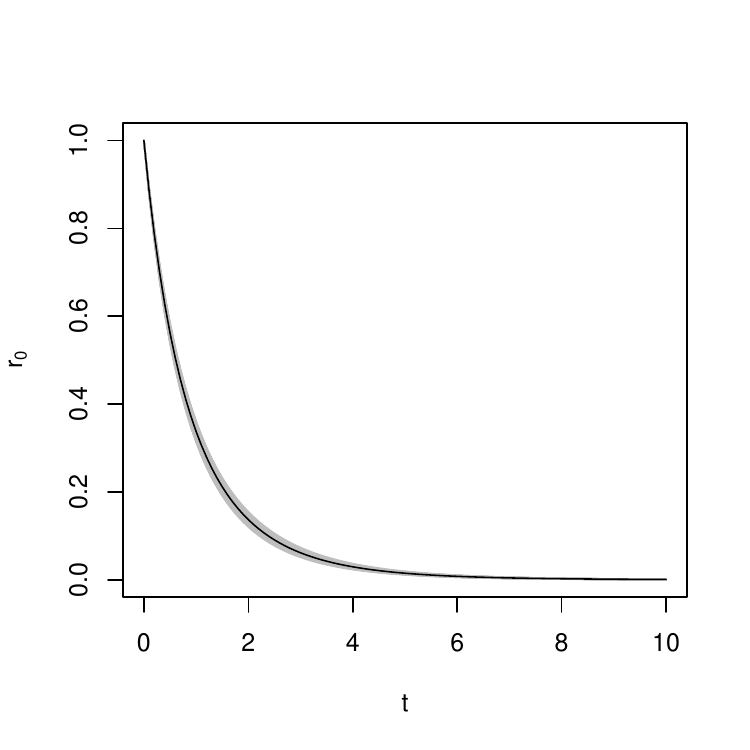"}
	\end{center}
	\caption{Correlations functions with inverse gamma Bernstein CDFs (black curves) with $(\tau,\phi)=(1,1)$ (left column) and $(\tau,\phi)=(5,5)$ (right column). The grey curves in each plot show 100 approximated correlation functions given by \eqref{e:approxc} with $n=20$ (upper row), $n=50$ (middle row), and $n=200$ (lower row).}
	\label{fig:algo3}
\end{figure}

\end{ex}

If $L$ is not a tree, Algorithms~\ref{a:2} and \ref{a:3} cannot be used, so we use the straightforward Algorithm~\ref{a:1} instead, provided of course that $Y$ is specified by a valid covariance function, meaning that Algorithm~\ref{a:1} may work for $d=d_{\mathcal{R}}$ but not for $d=d_{\mathcal{G}}$ as illustrated in the following example.

\begin{ex}  Figure~\ref{fig:chicago-GP} shows examples of simulations of zero mean GPs defined on the Chicago street network in Figure~\ref{fig:data} and with various isotropic covariance functions $c(u,v)=\sigma^2r_0(d(u,v))$ where $\sigma=1$ and in order that $c$ is valid we take $d=d_{{\mathcal R}}$, cf.\ Theorem~\ref{t:1}.  In the first two plots (the top row), $r_0(t)=\exp(-st)$ is 
an isotropic exponential correlation function with $s=0.1$ or $s=0.01$, and the next two plots (the middle row) relate to Theorem~\ref{t:3} with the Bernstein CDF given by a $\Gamma(1,100)$-distribution or a $\Gamma^{-1}(2,0.01)$-distribution, cf.\ Example~\ref{ex:0}. The densities for these Bernstein CDFs are shown in the left panel in the bottom row, and the last plot shows the   
corresponding correlation functions for $t\le200$ feet. 
The top row shows the scaling effect of the parameter $s$ for the exponential correlation function. Plots 2--4 are comparable, since $s$ has mean 0.01 in all three cases. 
Since $\max d_{\mathcal R}\approx 675$ feet, the last plot
indicates a rather strong correlation in the GPs in plots 2--4 and shows that the correlation is smallest when $r$ is fixed and rather similar when using the $\Gamma$ or $\Gamma^{-1}$ distribution although these distributions are rather distinct (e.g.\ the variance is finite for $\Gamma(1,100)$ but infinite for $\Gamma^{-1}(2,0.01)$). Accordingly, in plot 2 we see less smoothness than in plots 3 and 4 which show a similar degree of smoothness. 

\begin{figure}
	\begin{center}
	\includegraphics[width=6.5cm]{"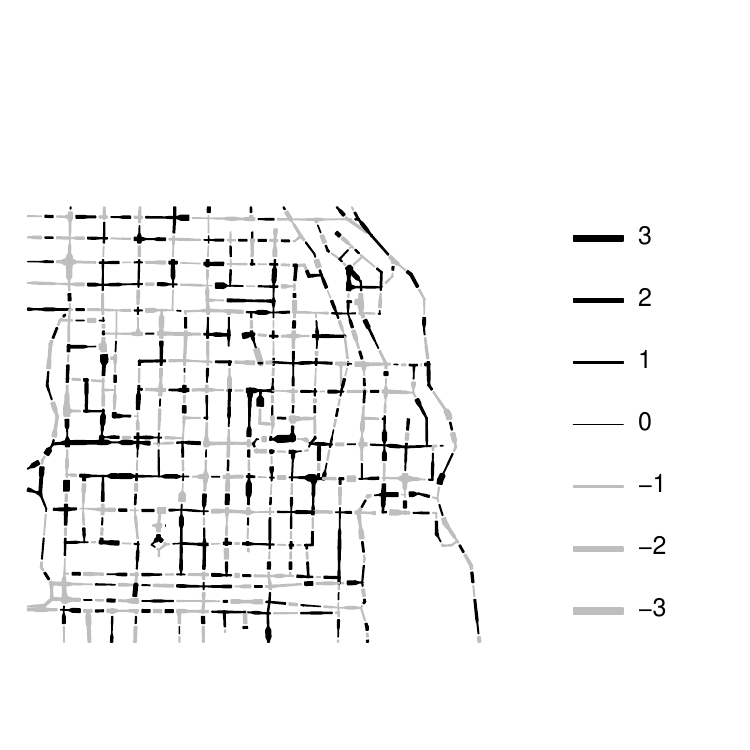"}
	\includegraphics[width=6.5cm]{"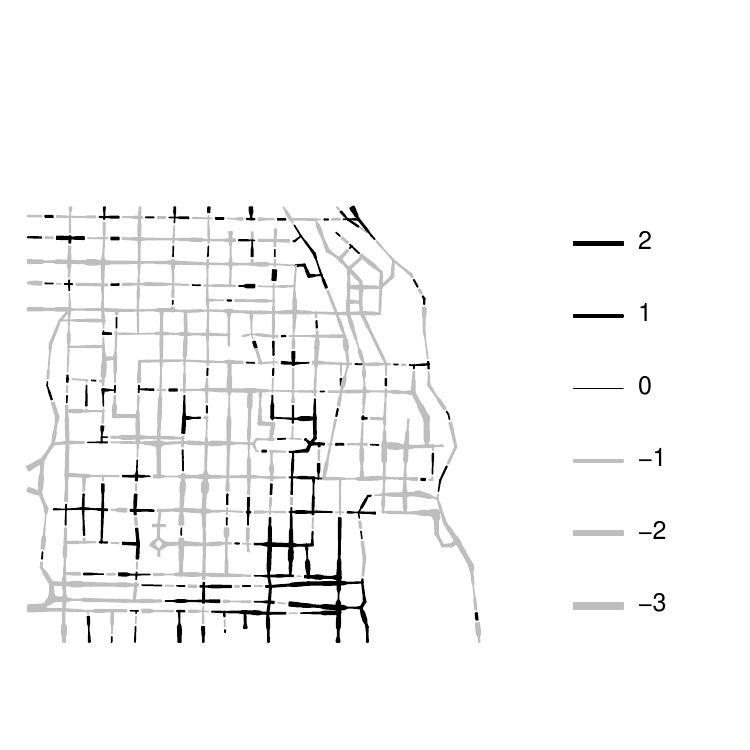"}\\
	\includegraphics[width=6.5cm]{"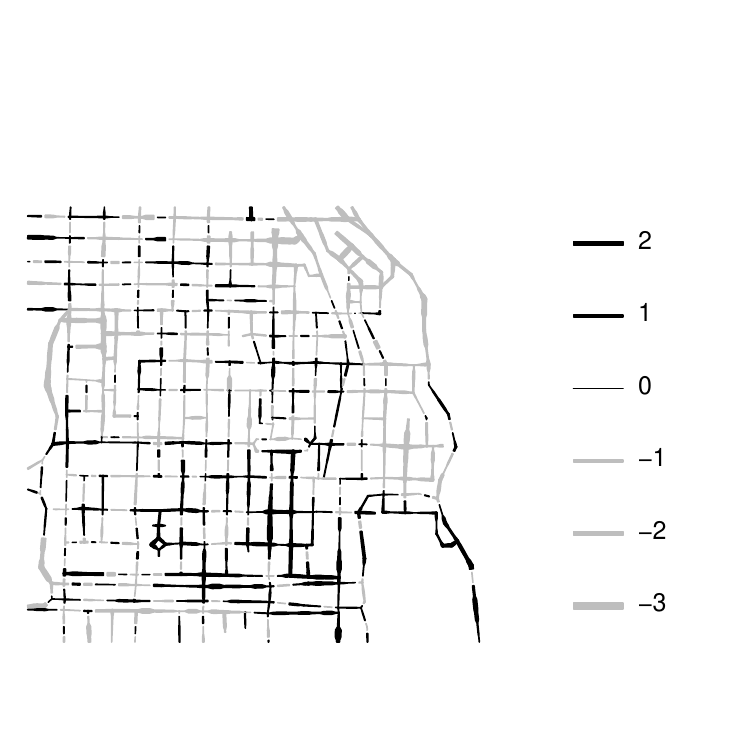"}
	\includegraphics[width=6.5cm]{"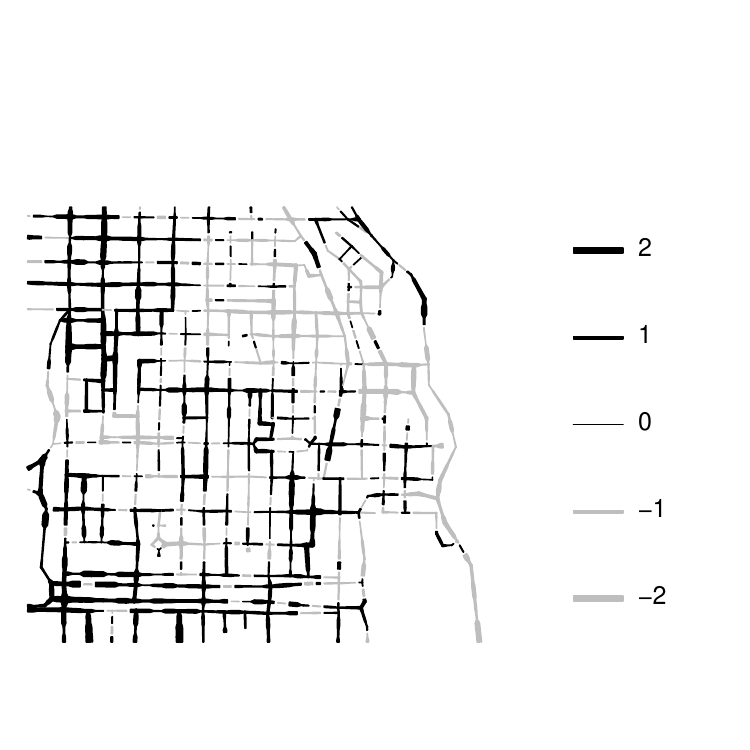"}\\
	\includegraphics[width=6cm]{"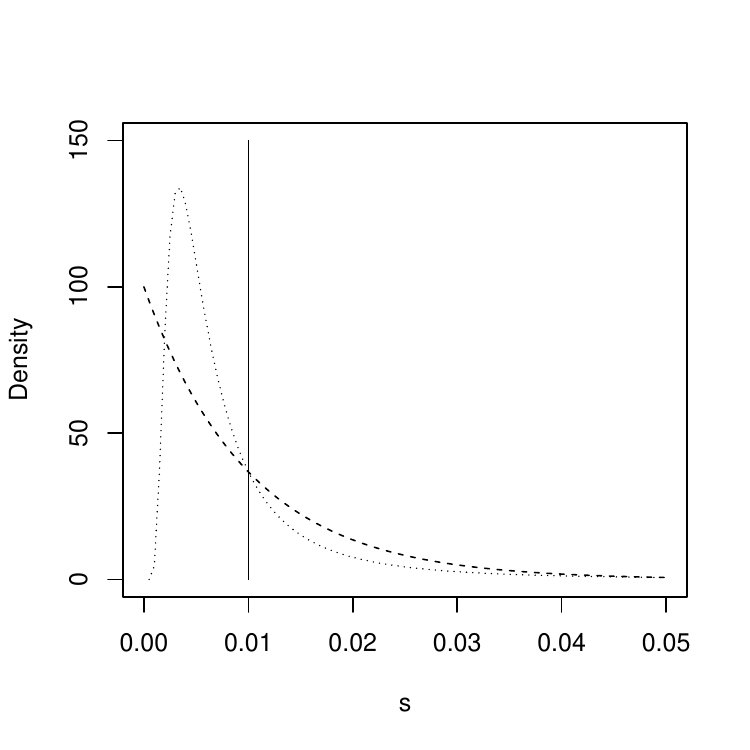"}
	\includegraphics[width=6cm]{"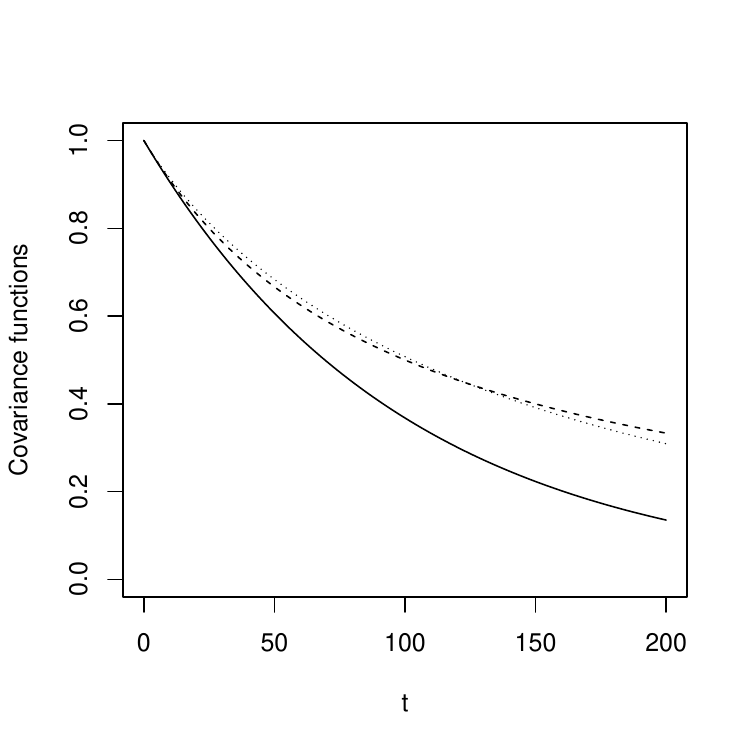"}
	\end{center}
	\caption{Simulation of zero mean GPs on the Chicago street network with an isotropic exponential covariance function $c(u,v)=r_0(d_{\mathcal{R}}(u,v))$. Top row: When $r_0(t)=\exp(-st)$ with parameter $s=0.1$ (left) or $s=0.01$ (right). Middle row: When $r_0$ has Bernstein CDF given by a gamma distribution (left) or inverse gamma distribution (right), where in both cases the mean of $s$ is 0.01. Bottom row: For plots 2--4, the corresponding densities for $F$ and correlation functions, where the curves in solid, dashed, and dotted correspond to plots 2--4, respectively.}
	\label{fig:chicago-GP}
\end{figure}

\end{ex}

\section{Point processes and some of their characteristics}\label{s:3}


This section reviews point processes, moment properties, and inference for point process models on linear networks, and the section provides the needed background material for Sections~\ref{s:cox}--\ref{s:appl}.
Readers who are familiar with the general theory for point processes defined on a metric space may glance many parts of Sections~\ref{s:3.1}, \ref{s:3.2}, and \ref{s:est-check}. Section~\ref{s:3.2.3} contains new results for the $K$-function on a linear network.

\subsection{Point process setting}\label{s:3.1}

 We restrict attention to point processes whose realisations can be viewed as finite subsets $x$ of $L$: Let $\mathcal B$ be the class of Borel sets $A\subseteq L$. We let the state space of 
 a point process on $L$ be $\mathcal N= \{x\subset L\,|\,\mbox{$x$ is finite} \}$. 
 For  
 $x\in \mathcal N$ and $A\in\mathcal B$, define $x_A= x\cap A$ and let $n(x_A)$ denote the cardinality of $x_A$.  
 Equip  $\mathcal N$ with the smallest $\sigma$-algebra $\mathcal F$ such that the mapping $x\mapsto n(x_A)$ is measurable for every $A\in\mathcal B$. Then
 by a point process is meant a random variable $X$ with values in $\mathcal N$ \citep[in the terminology of point process theory, $X$ is a simple locally finite point process, see e.g.][]{DaleyVere-Jones}. Equivalently, for every $A\in\mathcal B$, the count $N(A)=n(X_A)$ is a random variable.
 

We say that $X$ is a Poisson process with intensity function 
$\rho:L\mapsto[0,\infty)$ 
if 
$N(L)$ is Poisson distributed with finite mean $\int_L\rho(u)\,\mathrm d_L(u)$, and conditioned on $N(L)$, the points in $X$  are independent and each point has a density proportional to $\rho$ with respect to $\nu$. 

For every $u\in L$, we let $X_u$  be 
the point process which follows the reduced Palm distribution of $X$ at $u$, that is,
 \[\mathbb E\sum_{u\in X}h(X\setminus\{u\},u)=\int\rho(u)\mathbb E h(X_u,u)\,\mathrm d_L(u)\]
 for any non-negative measurable function $h$ defined on $\mathcal N\times L$ (equipped with the product $\sigma$-algebra of $\mathcal F$ and $\mathcal B$). Intuitively, $X_u$ follows the distribution of $X\setminus\{u\}$ conditioned on that $u\in X$ \citep[see e.g.][Appendix C]{MW04}. If $\rho(u)=0$, 
 $X_u$ may follow an arbitrary distribution. If   
  $X$ is a Poisson process with intensity function $\rho$, then $X$ and $X_u$ are identically distributed whenever 
 $\rho(u)>0$.
 
 Let $X_1$ denote the Poisson process with intensity 1. 
 Suppose 
 the distribution of $X$ is absolutely continuous with respect the distribution of $X_1$. 
 Let $f$ be the density of (the distribution of) $X$ with respect to (the distribution of) $X_1$.
 If $\rho(u)>0$, then $X_u$ has a density $f_u$ with respect to  $X_1$ such that
\begin{equation}\label{e:Palmdensity}
 f_u(\{u_1,\ldots,u_n\})=f(\{u,u_1,\ldots,u_n\})/\rho(u)\quad\mbox{for $n=1,2,\ldots$ and pairwise distinct }u_1,\ldots,u_n\in S\setminus\{u\}.
 \end{equation} 

\subsection{Moment properties}\label{s:3.2}
 
The point process $X$ has $n$-th order intensity function $\rho(u_1,\ldots,u_n)$ (with respect to the $n$-fold product measure of $\nu$) if this is a non-negative Borel function so that
\begin{equation}\label{e:1}
\mathbb E\left[N(A_1)\cdots N(A_n)\right]=\int_{A_1}\cdots\int_{A_n}\rho(u_1,\ldots,u_n)\,\mathrm d_L(u_1)\cdots\,\mathrm d_L(u_n)<\infty
\end{equation}
for every pairwise disjoint sets $A_1,\ldots,A_n\in\mathcal B$ ($\rho(u_1,\ldots,u_n)$ is also called the $n$-th order product density for the $n$-th order reduced moments measure). Thus, 
 $\rho(u_1,\ldots,u_n)$ is an integrable function, which is almost everywhere unique on $S^n$ (with respect to the $n$-fold product measure of $\nu$).
In the following, for simplicity nullsets are ignored, so the non-uniqueness of $\rho(u_1,\ldots,u_n)$ is ignored. Moreover, when we write $\rho(u_1,\ldots,u_n)$ it is implicitly assumed that the $n$-th order intensity function exists. 

In particular, $\rho(u)$ is the usual intensity function. The point process is said to be (first-order) homogeneous if $\rho(u)=\rho_0$ is constant. 

Instead of the second order intensity function, one usually considers the pair correlation function (pcf) given by
\[g(u,v)=\frac{\rho(u,v)}{\rho(u)\rho(v)},\]
 setting $\frac{a}{0}=0$ for $a\ge0$, and one usually assumes that
 \begin{equation}\label{e:iso-asump}
 g(u,v)=g_0(d(u,v))
 \end{equation}
 is isotropic. For a Poisson process, 
 \begin{equation}\label{e:Poisson-rho-n}
 \rho(u_1,\ldots,u_n)=\rho(u_1)\cdots\rho(u_n),
 \end{equation}
  so $g=1$. One often interprets $g_0(t)>1$ as repulsion/inhibition and $g_0(t)<1$ as attraction/clustering for point pairs at distance $t$ apart but one should be careful with this interpretation as $t$ grows. 

The specific models in this paper are typically attractive ($g>1$) and satisfies the following stronger property: For $n=2,3,\ldots$, $i=1,\ldots,n-1$, and any pairwise distinct $u_1,\ldots,u_n\in L$,
\begin{equation}\label{e:pos-cor}
\rho(u_1,\ldots,u_n)\ge \rho(u_1,\ldots,u_i)\rho(u_{i+1},\ldots,u_n),
\end{equation}
or equivalently, for any pairwise disjoint sets $A_1,A_2,\ldots\in\mathcal B$, 
\[\mathbb E\left[N(A_1)\cdots N(A_n)\right]\ge \mathbb E\left[N(A_1)\cdots N(A_i)\right]\mathbb E\left[N(A_{i+1})\cdots N(A_n)\right].\]
So \eqref{e:pos-cor} means that the counts $N(A_1),N(A_2),\ldots$ are positively correlated at all orders, and for brief we shall say that $X$ is positively correlated at all orders.  
 
Non-parametric estimation of $\rho$ and $g$ are discussed in 
\citet{RakshitEtAl}, \citet{RakshitEtAl19}, and \citet{BadEtAl2021}. 
For non-parametric estimation of the pcf, usually
kernel methods are used under the assumption \eqref{e:iso-asump} of isotropy (but we do not need to assume that $X$ is (first-order) homogeneous). 

\subsection{$K$-function}\label{s:3.2.3}

Since kernel methods are sensitive to the choice of bandwidth,  
popular alternatives  are given by 
estimators 
of the $K$-function 
defined in 
\eqref{e:K2new} 
below. On the other hand, for the specific parametric models in this paper, we have simple expressions for $g_0$ but not for $K$,
 and
 since the $K$-function is an accumulated version of $g_0$, it may be harder to interpret.

Suppose 
$g(u,v)=g_0(\delta(u,v))$ where $\delta$ is a metric on $L$ (since we consider derivatives below, it is convenient to switch from the previous notation $d$ to $\delta$). Then $X$ is said to be $\delta$-correlated, cf.\ \citet{RakshitEtAl}; if  $\delta=d_{\mathcal G}$, $X$ is also said to be second-order reweighted pseudostationary \citep{AngEtAl}.
Following \citet{RakshitEtAl} and defining $R=\inf_{u\in L}\sup_{v\in L}d(u,v)$, 
the $K$-function is given by
\begin{equation}\label{e:K2new}
K(t)= 
\int_0^t g_0(r)\,\mathrm dr,\quad 0\le t\le R.
\end{equation}
Note that $K$ depends only on $g_0$, but $g$ depends on both $g_0$ and $\delta$.
If $X$ is a Poisson process, then $K(t)=t$.

Non-parametric estimation of $K$ was carefully studied in \citet{RakshitEtAl} under the following technical assumption for the metric.  
Suppose $\delta$ is regular, meaning that for every $u\in L$, 
$\delta(u,v)$ is a continuous function of $v\in L$ and 
there is a finite set $N\subset L$ such that for $i=1,\ldots,m$ and all $v\in L_i\setminus N$, the Jacobian 
\begin{equation}\label{e:Jacobian}
J_\delta(u,v)=|(\mathrm d/\mathrm dt) \delta(u,v)|
\end{equation}
exists and is non-zero where $t=\|v-a_i\|$. 
 Both $d_\mathcal{G}$ and $d_\mathcal{R}$ are regular, where $J_{d_\mathcal{G}}=1$ and a useful expression for the calculation of $J_{d_\mathcal{R}}$ is given in the following corollary which follows immediately from Proposition~\ref{p:calc}.

\begin{cor}\label{c:weight}
For all $u\in L_j$ and $v\in L_i$ with $u\not=v$, 
using a notation as in Proposition~\ref{p:calc}, we have
 \begin{align}
\frac{\mathrm d}{\mathrm d t}d_{\mathcal{R}}(u,v) =
\begin{cases}
2A_i(t-s)+1 & \text{if }i=j,\ t> s,\\
2A_i(t-s)-1 & \text{if }i=j,\ t< s,\\
2A_it+B_{ij}(s) & \text{if }i\not=j.
\end{cases}
\label{e:der-d-R} 
\end{align}
\end{cor}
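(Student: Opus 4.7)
The plan is to read the corollary as a direct differentiation of the closed-form expressions already supplied in Proposition~\ref{p:calc}. The key observation is that for a fixed $u\in L_j$ and fixed index $i$, the parameter $s=\|u-a_j\|$, the coefficient $A_i$, and in the case $i\neq j$ the quantities $B_{ij}(s)$ and $C_{ij}(s)$ depend only on $u$ and on the fixed parametrizations of $L_i, L_j$, so they are constants with respect to $t=\|v-a_i\|$. In particular, formulas \eqref{e:d-R1} and \eqref{e:d-R2} express $d_{\mathcal{R}}(u,v)$ as an explicit (piecewise) polynomial of degree at most two in $t$, and the corollary is simply $\tfrac{\mathrm d}{\mathrm d t}$ applied to that polynomial.

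First I would dispatch the case $i\neq j$. Differentiating $d_{\mathcal R}(u,v)=A_it^2+B_{ij}(s)t+C_{ij}(s)$ in $t$ gives $2A_it+B_{ij}(s)$, which is exactly the third line of \eqref{e:der-d-R}. Next I would handle the case $i=j$. On the open interval $t>s$ formula \eqref{e:d-R1} gives $d_{\mathcal R}(u,v)=A_i(t-s)^2+(t-s)$, whose $t$-derivative is $2A_i(t-s)+1$. On $t<s$ we instead have $A_i(s-t)^2+(s-t)$; a chain-rule computation yields $-2A_i(s-t)-1=2A_i(t-s)-1$, matching the second line. The point $t=s$ (i.e.\ $v=u$) is excluded, consistent with Proposition~\ref{p:calc}(A) which already records that $d_{\mathcal R}(u,v)$ as a function of $t$ is only differentiable on $[0,l_i]\setminus\{s\}$; this is why the corollary is stated under the assumption $u\neq v$.

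Honestly, there is no real obstacle here: the substantive work — reducing $d_{\mathcal R}(u,v)$ to an explicit quadratic in $t$ whose coefficients are independent of $t$ — was already carried out in Proposition~\ref{p:calc}. The only thing to be a bit careful about is the sign in the $i=j$, $t<s$ case, where one must remember that $\tfrac{\mathrm d}{\mathrm d t}(s-t)^2=-2(s-t)$ rather than $2(s-t)$; once this is noted, both expressions in \eqref{e:der-d-R} for the case $i=j$ agree as $t\to s$ up to the jump $\pm 1$ expected from the non-differentiability at $t=s$. So the proof reduces to a one-line appeal to Proposition~\ref{p:calc} together with elementary differentiation.
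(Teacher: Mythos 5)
Your proposal is correct and matches the paper's intent exactly: the paper states that the corollary ``follows immediately from Proposition~\ref{p:calc}'', i.e.\ by differentiating the explicit (piecewise) quadratic expressions \eqref{e:d-R1} and \eqref{e:d-R2} in $t$, which is precisely what you do, including the sign care in the $i=j$, $t<s$ branch.
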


Some final remarks are in order. 

For $u\in L$ and $0\le t\le R$, define
\[w_\delta(u,t)=1\bigg/\sum_{v\in L:\, \delta(u,v)=t}1/J_\delta(u,v).\]
This is a weight which accounts for the geometry of the linear network when shifting from arc length measure on $L$ to Lebesgue measure on the positive half-line \citep[][Propositions 1 and 2]{RakshitEtAl}. 
For $\delta=d_{\mathcal{G}}$, we have 
$w_{\mathcal G}(u,t)=1/\#\{v\in L\,|\,\delta(u,v)=t\}$, since $J_{d_\mathcal{G}}=1$, and  
for $\delta=d_{\mathcal{R}}$, once the matrix $\Sigma$ from Section~\ref{s:res-met} has been calculated, $w_{d_{\mathcal{R}}}$ is quickly calculated from \eqref{e:der-d-R}.

It follows from \eqref{e:1}, \eqref{e:K2new}, and \citet[][Equation (8)]{RakshitEtAl}  that 
for any $A\in\mathcal B$ of positive arc length measure, 
\begin{equation}
K(t)=
\frac{1}{\nu(A)}\mathbb E\sum_{u\in X_A}\sum_{v\in X\setminus\{u\}}\frac{1(\delta(u,v)\le t)w_\delta(u,\delta(u,v))}{\rho(u)\rho(v)}.
\label{e:K3}
\end{equation}
Non-parametric estimators of $K$ are based on omitting the expectation symbol in \eqref{e:K3}, possibly after elaborating on the right hand side in \eqref{e:K3} in order to realize how correction factors can be included in order to adjust for edge effects, cf.\ \citet{RakshitEtAl}.  We may use \eqref{e:K3} as a more general definition of $K$ without assuming the existence of the pcf but requiring that the right hand side in \eqref{e:K3} is not depending on the choice of $A$, cf.\ \cite{BMW}.
 
In terms of Palm probabilities, for $d_L$-almost all $u\in L$ with $\rho(u)>0$,
\[K(t)=\frac{1}{|L|}\mathbb E \sum_{v\in X_u}\frac{1(\delta(u,v)\le t)w_\delta(u,\delta(u,v))}{\rho(v)}.\]
In general
the weight makes it hard to interpret this expression of $K$. 
 For point processes with points in $\mathbb R^k$ or $\mathbb S^k$, 
 there are much simpler expressions of $K$-functions in terms of Palm probabilities,
 see e.g.\ Baddeley et al.\ (2000) and M\o ller and Rubab (2016). 
 \nocite{BMW} \nocite{MollerRubak} 
 This is caused by that a translation is a natural transitive group action on $\mathbb R^k$ and a rotation is a natural transitive group action on $\mathbb S^k$. However, there is no natural transitive group action on a linear network.

\subsection{Estimation and model checking}\label{s:est-check} 
 
For parametric families of Cox point process models the most common estimation methods are based on the intensity, pair correlation, or $K$-functions using either minimum contrast estimation, composite likelihood, or Palm likelihoods, see \citet[][]{MW07,MW17} and the references therein. For the analyses in Section~\ref{s:appl}, we use minimum contrast estimation for fitting the Cox process models in Section~\ref{s:models} to various datasets in Section~\ref{s:appl} using the pcf (we discuss this choice of estimation method in Section~\ref{s:7.3}): If $g_0$ depends on a parameter, we estimate this parameter by minimizing the integral
\begin{equation}\label{e:contrast}
D(g_0,\hat g_0) = \int_{a_1}^{a_2} \left|g_0(t)^q-\hat{g}_0(t)^q\right|^p \mathrm{d}t
\end{equation}
where $\hat g_0$ is a non-parametric estimate of $g_0$ \citep[see e.g.\ (31) in][]{RakshitEtAl} and $0\leq a_1 < a_2$ and $p,q>0$ are user-specified values (see Section~\ref{s:appl}). The models in Section~\ref{s:models} all have nice expressions of the pcf but not of the $K$-function. Moreover, the models used for the data analyses in Section~\ref{s:appl} include a parameter for the intensity function, which $g_0$ does not depend on, and this parameter is estimated by a simple moment method; in the simplest case homogeneity is assumed and the intensity is estimated by $\hat{\rho}={n(x)}/{|L|}$. 

For model checking other functional summary statistics are needed when $\rho$, $g$, or $K$ and their corresponding non-parametric estimators have been used for estimation. 
\citet{CronieEtAl} suggested analogies to the so-called $F,G,J$-functions      \citep[introduced in][when considering point processes with points in $\mathbb R^k$]{Inhom-J-fct} which account for the geometry of the linear network. \citet[][]{CronieEtAl} 
showed that their $F,G,J$-functions make good sense under a certain condition called intensity reweighted moment pseudostationarity
(IRMPS): $X$ is IRMPS if $\inf\rho>0$ and $\delta$ is a regular metric on $L$ such that for $n=2,3,\ldots$, any pairwise distinct $u_1,\ldots,u_n\in L$, and any $u\in L$, $g(u_1,\ldots,u_n)=\rho^{(n)}(u_1,\ldots,u_n)/[\rho(u_1)\cdots\rho(u_n)]$ is of the form
\begin{equation}\label{e:IRMPS}
g(u_1,\ldots,u_n)=g_0(\delta(u,u_1),\ldots,\delta(u,u_n))
\end{equation}
for some function $g_0$. This condition is satisfied if  $X$ is either  a Poisson process or  
 a log Gaussian Cox process (LGCP) with a stationary pair correlation function (which is usually not a natural assumption, cf.\ Section~\ref{s:LGCP}). Apart from these examples 
\citet[][]{CronieEtAl} did not 
 verify any other cases of models where IRMPS is satisfied \citep[in Section~\ref{s:LGCP} we correct some mistakes in][]{CronieEtAl}. At least
\citet[][]{CronieEtAl} demonstrated 
the practical usefulness of their empirical estimator of the $J$-function for both a Poisson process, a simple
sequential inhibition (SSI) point process, and a LGCP. We show in Section~\ref{s:LGCP} that IRMPS is in general not satisfied for the LGCP. For the SSI point process in \citet[][]{CronieEtAl} it is hard to evaluate $g(u_1,\ldots,u_n)$ and hence to check the assumption of IRMPS. 

Alternatively, \citet{HeidiMe} introduced three purely empirical summary functions obtained by modifying the empirical $F,G,J$-functions for (inhomogeneous)
point patterns on a Euclidean space to linear
networks. Briefly, the modification consists of replacing the Euclidean space with
the linear network, introducing the shortest path distance instead of the Euclidean
distance, and adapting the notion of an eroded set to linear networks. \citet{HeidiMe} demonstrated the usefulness of their empirical $F,G,J$-functions for model checking although underlying theoretical functions are missing. We also use these functions for the data analyses in Section~\ref{s:appl}. Specifically, we will consider a concatenation of the three functions and validate a fitted model using a 95\% global envelope (i.e., a 95\%  confidence region for the concatenated function) and a $p$-value obtained by the 
global envelope test (based on the extreme rank length) as described in \citet{Mylly1}, \citet{Mylly2}, and \citet{Mylly3}. 
For the dendrite spine locations datasets analysed in \citet{HeidiMe} it is concluded that the results based on such a global envelope test are consistent with the results obtained if instead the summary functions from {\citet[][]{CronieEtAl}} are used.  

\section{Cox processes driven by transformed Gaussian processes}\label{s:cox}

This section introduces Cox processes on linear networks and in particular various kinds of model classes obtained by a transformed Gaussian process (interpreted as a random intensity function). Since such Cox process  models are introduced for the first time they deserve some attention, although readers who are familiar with similar models defined on $\mathbb R^k$ or $\mathbb S^k$ may glance many parts of Sections~\ref{s:cox-general}--\ref{s:models}. Moreover, Section~\ref{s:index} introduces an index of cluster strength.

\subsection{Cox processes on linear networks}\label{s:cox-general}

Let  $\Lambda=\{\Lambda(u)\,|\,u\in L\}$ be a non-negative stochastic process and for any  $A\in\mathcal B$,  define the random measure $\xi(A)=\int_A\Lambda(u)\,\mathrm d_L(u)$. Throughout this section we assume that almost surely $\xi(L)$ is finite, and we let
$X$ be a Cox process driven by $\Lambda$: That is, $X$ is a point process on $L$ which conditioned on $\Lambda$ is almost surely a Poisson process with intensity function $\Lambda$ (with respect to $\nu$). 
Usually in applications $\Lambda$ is unobserved, and so if only one point pattern dataset is observed the Cox process $X$ is indistinguishable from the inhomogeneous Poisson process $X|\Lambda$  \citep[for a discussion of which of the two models is most appropriate, see][Section 5.1]{MW04}. 

Henceforth, assume $\mathbb E\Lambda(u)$ is an integrable function with respect to $\nu$. Then the Cox process $X$ is well-defined, since almost surely $\xi(L)<\infty$.
By conditioning on $\Lambda$ it follows from \eqref{e:1} and \eqref{e:Poisson-rho-n} that
\begin{equation}\label{e:Cox-rho-n}
\rho(u_1,\ldots,u_n)=\mathbb E\left[\Lambda(u_1)\cdots\Lambda(u_n)\right].
\end{equation}

Let $W\subseteq L$ be a bounded Borel set with $\nu(W)>0$; we think of $W$ as an observation window. Then  $X_W$ is a Cox process driven by $\Lambda$ restricted to $W$, and $X_W$ has a density given by
\begin{equation}\label{e:densityCox}
f(\{u_1,\ldots,u_n\})=\mathbb E\left[\exp\left(\int_W (1- \Lambda(u))\,\mathrm d_L(u)\right)\prod_{i=1}^n\Lambda(u_i)\right]\quad\mbox{for pairwise distinct }u_1,\ldots,u_n\in W
\end{equation}
with respect to $X_1\cap W$, where $X_1$ is the unit rate Poisson process, cf.\ Section~\ref{s:3.1}.
In particular, if $W=L$ and $u\in L$ with $\rho(u)>0$, then $X_u$ has density
\begin{equation}\label{e:PalmdensityCox} 
f_u(\{u_1,\ldots,u_n\})=\mathbb E\left[\exp\left(\int_W (1- \Lambda(u))\,\mathrm d_L(u)\right)\frac{\Lambda(u)}{\rho(u)}\prod_{i=1}^n\Lambda(u_i)\right]
\end{equation}
with respect to $X_1$, cf.\ \eqref{e:Palmdensity}.
In general the densities in \eqref{e:densityCox} and \eqref{e:PalmdensityCox} are intractable because the expected values are difficult to evaluate. Instead we  
exploit \eqref{e:Cox-rho-n} for calculating the intensity and pair correlation functions and for making inference as demonstrated in the following sections. 

As pointed out in \citet{MW07} it is useful to write $\Lambda$ as
\begin{equation}\label{e:Lambda}
\Lambda(u)=\rho(u)\Lambda_0(u)
\end{equation}
where $\Lambda_0=\{\Lambda_0(u)\,|\,u\in S\}$ is a non-negative `residual' stochastic process with $\mathbb E\Lambda_0(u)=1$ whenever $\rho(u)>0$.
Then
\[\rho(u_1,\ldots,u_n)=\rho(u_1)\cdots\rho(u_n)\mathbb E\left[\Lambda_0(u_1)\cdots\Lambda_0(u_n)\right]\]
and 
 \begin{equation}\label{e:g-Lambda0}
 g(u,v)=\mathbb E\left[\Lambda_0(u)\Lambda_0(v)\right].
 \end{equation}
Typically, it is only $\rho(u)$ which is allowed to depend on covariate information, whilst $\Lambda_0$ is considered to account for unobserved covariates or other effects which has not been successfully fitted by a Poisson process with intensity function $\rho$, see e.g.\ \citet{MW07} and \citet{Diggle}. 

If 
\begin{equation}\label{e:all-orders}
\mathbb E\left[\Lambda_0(u_1)\cdots\Lambda_0(u_n)\right]\ge\mathbb E\left[\Lambda_0(u_1)\cdots\Lambda_0(u_i)\right]\mathbb E\left[\Lambda_0(u_{i+1})\cdots\Lambda_0(u_n)\right]
\end{equation}
for $n=2,3,\ldots$, $i=2,\ldots,n$, and all pairwise distinct $u_1,\ldots,u_n\in S$, then
$X$ is positively correlated at all orders, 
 cf.\ \eqref{e:pos-cor}. The condition \eqref{e:all-orders} will often be satisfied in the following.

\subsection{Models}\label{s:models}

Consider a GP $Y=\{Y(u)\,|\,u\in L\}$ with mean function $\mu$ and covariance function $c$, and let $Y_1,\ldots,Y_h$ be independent copies of $Y$.
In the remainder of this paper we study the following models.
\begin{itemize}
\item $X$ is a log Gaussian Cox process (LGCP) if 
\begin{equation}\label{e:defLGCP}
\Lambda_0(u)=\exp(Y(u))
\end{equation}
and $\mu(u)=-c(u,u)/2$ for all $u\in L$. The latter condition is required 
 since we want $\mathbb E\Lambda_0(u)=1$.  
\item $X$ is an interrupted Cox process (ICP) if 
 $\mu=0$ and 
\begin{equation}\label{e:defInter}
\Lambda_0(u)=\Pi(u)(1+2c(u,u))^{h/2}
\end{equation}
for all $u\in L$ where we define $\Pi(u)=\exp(-\sum_{i=1}^h Y_i(u)^2)$ (this definition differs slightly from the one used in \cite{FredMe}, which includes a factor $1/2$ inside the exponential function). Since $\mathbb E\Pi(u)=(1+2c(u,u))^{-h/2}$, we have $\mathbb E\Lambda_0(u)=1$. 
\item  $X$ is a permanental Cox point process (PCPP) if $\mu=0$, $c(u,u)=1$, and 
\begin{equation}\label{e:defPCPP}
\Lambda_0(u)=\frac{1}{h}\sum_{i=1}^h Y_i(u)^2
\end{equation}
for all $u\in L$. Since the sum in \eqref{e:defPCPP} is $\chi^2(h)$-distributed, $\mathbb E\Lambda_0(u)=1$.
\end{itemize}
In all cases, the distribution of $X$ is completely specified by $(\rho,c)$ and in the case of an ICP or PCPP the value of $h$. 
Note that the intensity function $\rho$ can be any non-negative locally integrable function with respect to $\nu$.

Similarly defined LGCP, ICP, and PCPP models are well-studied for point processes with points in $\mathbb R^k$ or $\mathbb S^k$, and most of their properties immediately extend to linear networks as summarized in the following.

\subsubsection{Properties of log Gaussian Cox processes}\label{s:LGCP}

Let $X$ be a LGCP, cf.\ \eqref{e:defLGCP}. 
As in \citet{LGCP} and \citet{CoeurjollyEtAl}, we obtain 
the following results. 
 For any integer $n\ge2$ and any pairwise distinct $u_1,\ldots,u_n\in L$,
\begin{equation}\label{e:n-LGCP}
\rho(u_1,\ldots,u_n)=\rho(u_1)\cdots\rho(u_n)\exp\bigg(\sum_{1\le i<j\le n}c(u_i,u_j)\bigg).
\end{equation}
In particular, the LGCP is determined by $\rho$ and 
\begin{equation}\label{e:pairLGCP}
g=\exp(c),
\end{equation} 
i.e., by its first and second order moment properties. Note that $g$ is isotropic if and only if $c$ is isotropic, and 
then $\rho(u_1,\ldots,u_n)$ depends only on the inter-point distances $d(u_i,u_j)$, $1\le i<j\le n$.  
Moreover, in most specific models, including those in Table~\ref{tab:covariances-sphere}, $c\ge0$ or equivalently $X$ is positively correlated at all orders, cf.\ \eqref{e:pos-cor} and \eqref{e:n-LGCP}. 
 
For $u\in L$ with $\rho(u)>0$, the reduced Palm process $X_u$ is a LGCP with intensity function $\rho(v|u)=\rho(v)\exp(c(u,v))$ but the pcf is still $g(v,w|u)=g(v,w)=\exp(c(v,w))$ for $v,w\in L$. This follows from \eqref{e:densityCox} and \eqref{e:PalmdensityCox} 
along similar lines as in \citet{CoeurjollyEtAl}.
Consequently, if $c$ is isotropic, the $K$-functions of $X$ and $X_u$ agree. 

Let us return to the concept of IRMPS as defined by \eqref{e:IRMPS}. \citet{CronieEtAl} noticed that IRMPS is satisfied  for the LGCP if $\inf\rho>0$ and for all $u_1,u_2,u\in L$, 
\begin{equation}\label{e:strong-cond}
c(u_1,u_2)=c_1(\delta(u,u_1),\delta(u,u_2))
\end{equation} 
for some function $c_1$ \citep[in our notation; see][Equation (29)]{CronieEtAl}. This statement is true due to \eqref{e:n-LGCP}, however, in our opinion \eqref{e:strong-cond} is a very strong condition, since we are not aware of any good examples where it is satisfied unless $L$ is isometric to a closed interval and $\delta$ is usual (Euclidean/geodesic/resistance) distance. 

Incidentally, in \citet[][Lemma 2]{CronieEtAl} the metric $\delta$ is assumed to be origin independent; they did not define the meaning of `origin independent' but we have been informed (by personal communication) that they had in mind that \eqref{e:strong-cond} should be  satisfied and 
when they let $\delta=d_{\mathcal R}$ be the resistance metric \citep[in the text after Lemma 2 in][]{CronieEtAl},
they admit that they misunderstood the meaning of origin independent as used in \citet[][Proposition 2]{AnderesEtAl}. Moreover, 
the proof of \citet[][Lemma 2]{CronieEtAl} is incorrect (they claim that $\delta(u',u_1)=\delta(u'',u_1)$ for any $u',u'',u_1\in L$, which is obviously not correct if $\delta=d_{\mathcal R}$, and which seems wrong in general for another choice of metric). 

\subsubsection{Properties of interrupted Cox processes}

Let $X$ be an ICP, cf.\ \eqref{e:defInter}. 
Then $X$ conditioned on $\Pi$ is obtained by an independent thinning of a Poisson process $Z$ on $L$ with intensity function $\rho_Z(u)=\rho(u)\left(1+c(u,u)\right)^{h/2}$, where the selection probabilities are given by $\Pi$. In the terminology of \citet{Stoyan}, $X$ is an interrupted point process. 


Assuming for ease of presentation  that $c(u,v)=\sigma^2r_0(d(u,v))$ is isotropic with $r_0(0)=1$, we obtain in a similar way as in \citet{FredMe} that the mean selection probability is constant and given by
\begin{equation}\label{e:pms}
p_{\mathrm{ms}}=(1+2\sigma^2)^{-h/2}
\end{equation}
and the pcf is given by
\begin{equation}\label{e:g0ICP}
g_0(t)=\left(\frac{(1+\sigma^2)^2}{(1+\sigma^2)^2-\sigma^4 r_0(t)^2}\right)^{h/2}.
\end{equation}
Third and higher-order moment results are less simple to express, but it can be proven that $X$ is positively correlated at all orders.
As $\sigma$ increases from 0 to infinity, then $p_{\mathrm{ms}}$ decreases from 1 to 0, whilst $g_0(t)$ increases from 1 to $(1-r_0(t)^2)-{h/2}$
if $r_0(t)\not=0$, which shows a trade-off between the degree of thinning and the degree of clustering. 
To understand how $g_0(t)$ depends on $h$ it
 is natural to fix the value of $p_{\mathrm{ms}}\in(0,1)$. 
   Then
  \[g_0(t)=\left(1+\left(1-p_{\mathrm{ms}}^{h/2}\right)^2r_0(t)^2\right)^{-h/2}\]
  is a strictly decreasing function of $h$ whenever $r_0(t)\not=0$.
Consequently, taking $h=1$ is natural if we wish to model as much clustering as possible when keeping the degree of thinning fixed. 

When $r_0(t)=\exp(-t/\phi)$ is an isotropic exponential covariance function, expressions of $K$ for $h=1,2,\ldots,5$ were given in \cite[][Appendix A]{HeidiMe}. Although these expressions were given for $d=d_{\mathcal G}$, they remain true for a general metric $d$ because the pcf depends only on $h$, $\sigma^2$, and $r_0(t)=\exp(-t/\phi)$, cf.\ \eqref{e:g0ICP} and our comment after \eqref{e:K2new}. 

\subsubsection{Properties of permanental Cox point processes}

Let $X$ be a PCPP, 
cf.\ \eqref{e:defPCPP}. Permanental Cox point processes with points in $\mathbb R^k$ were studied in \citet{Macchi:75} and \citet{MacCM}; using the parametrization in the latter paper, $X$ is a PCPP with parameters $\alpha=h/2$ and $C(u,v)=\sqrt{\rho(u)\rho(v)}c(u,v)/\alpha$. 

To stress that $c$ is a correlation function, we shall write $c=r$. For $n=1,2,\ldots$ and $u_1,\ldots,u_n\in L$, define the $\alpha$-weighted
permanent by
\[{\mathrm{per}}_\alpha[r](u_1,\ldots,u_n)=\sum_{\pi}\alpha^{\#\pi}r(u_1,u_{\pi_1})\cdots r(u_n,u_{\pi_n})
\]
where the sum is over all permutations $\pi=(\pi_1,\ldots,\pi_n)$ of $(1,\ldots,n)$ and $\#\pi$ is the number of cycles. 
The usual permanent corresponds to $\alpha = 1$ \citep{Minc}, in which case $X$ is also called a Boson process \citep{Macchi:75}. We have
\begin{equation*}\label{e:uha}
\rho(u_1,\ldots,u_n)=\rho(u_1)\cdots\rho(u_n)\mathrm{per}_\alpha[r](u_1,\ldots,u_n)/\alpha^n
\end{equation*}
from which it can be verified that 
 $X$ is positively correlated at all orders. It also follows that the degree of clustering is a decreasing function of $\alpha$. In particular,
\begin{equation}\label{e:pairPCPP}
g(u,v)=1+r(u,v)^2/\alpha.
\end{equation}
Thus $g\le 1+1/\alpha\le 3$, which reflects the limitation of modelling clustering by a PCPP.  

\citet{Valiant} showed that exact computation of permanents of general
matrices is a $\#$P (sharp P) complete problem, so no deterministic polynomial time algorithm is available. 
For most statistical purposes, approximate computation of
permanent ratios is sufficient, and analytic approximations are available for large $\alpha$. However, as $\alpha\rightarrow\infty$, $X$ tends to a Poisson process and the process becomes less and less interesting for the purpose of modelling clustering. 

The PCPP can be extended to the case where $\alpha\ge0$ and $c$ is not necessarily a covariance function (in which case we loose the connection to Gaussian and Cox processes), see \citet{MacCM} and \citet{ShiraiTakahashi}. Indeed the process also extends to the case where $\alpha$ is a negative integer whereby a (weighted) determinantal point process (DPP) is obtained. We return to DPPs in Section~\ref{s:7.1}. 

\subsection{An index of cluster strength}\label{s:index}

Consider again a LGCP, ICP, or PCPP $X$  with an isotropic covariance function $c(u,v)=\sigma^2r_0(d(u,v))$ where $r_0(0)=1$, $h=1$ if $X$ is an ICP, and $\sigma=1$ if $X$ is a PCPP. 
To quantify how far $X$ is 
from a Poisson process we follow 
\citet{BadEtAl2022} in defining 
\begin{equation}\label{e:varphi}
\varphi=g_0(0)-1.
\end{equation}
If $X$ has constant intensity $\rho$,
the total variation distance between the distribution of $X$ and that of a Poisson process with intensity $\rho$ is at most $\rho\nu(S)\sqrt\varphi$. This follows since 
\citet[][Lemma~9]{BadEtAl2022} immediately extends to our  setting of linear networks.

\citet{BadEtAl2022} used $\varphi$ to describe the degree of clustering for another class of Cox processes, namely Neyman-Scott point processes (on $\mathbb R^k$ and with the cluster size following a Poisson distribution): the degree of clustering increases as $\varphi$ increases. 
Although our Cox processes are not cluster point processes, $\varphi$ has a similar interpretation since realizations of the processes may look more or less clustered. More precisely, 
it follows from \eqref{e:pairLGCP}, \eqref{e:g0ICP}, and \eqref{e:pairPCPP} that $\varphi$ is equal to
\begin{equation}\label{e:varphiICP}
\exp(\sigma^2)-1\mbox{ if $X$ is a LGCP},\quad 
\frac{1+\sigma^2}{\sqrt{1+2\sigma^2}}-1\mbox{ if $X$ is an ICP},\quad 
2/h \mbox{ if $X$ is a PCPP,}
\end{equation}
and $\varphi+1$ is the maximal value of the pcf.
Thus, if $X$ is a LGCP or an ICP, $\varphi$ is a strictly increasing function of $\sigma>0$, where $X$ approaches a Poisson process as $\sigma\rightarrow0$, while realizations of $X$ become more and more clustered as $\sigma$ grows. If instead $X$ is a PCPP, the degree of clustering decreases as $h\in\{1,2,...\}$ increases, where $X$ approaches a Poisson process as $h\rightarrow\infty$. 

The index $\varphi$ for `cluster strength' is used in Sections~\ref{s:simstudy} and \ref{s:7.3}.

\section{Application of statistical inference procedures}\label{s:appl}

For the analyses of the real and simulated datasets in this section, we have used the functions \texttt{mincontrast} and \texttt{linearpcf} from \texttt{spatstat} for calculating the minimum contrast estimates and the non-parametric estimate of the pcf (the latter is only available in the case of the geodesic metric), respectively, as well as a range of other minor functions from \texttt{spatstat} for handling point processes on linear networks. Furthermore, we have used the function \texttt{global\_envelope\_test} from the \texttt{GET} package for global envelope tests. The rest of the code (such as the estimation of the pcf in the case of the resistance metric, or the estimation of the $F$, $G$, and $J$-functions) we have implemented ourselves, and it is available in our package \texttt{coxln}. 

\subsection{Analysis of Chicago crime dataset}

\cite{AngEtAl} used the empirical $K$-function together with simulations to show that the Chicago crime dataset (shown in the left panel of Figure~\ref{fig:data} and available in \texttt{spatstat}) was more clustered than a homogeneous Poisson process while an inhomogeneous Poisson process with log-quadratic intensity fitted by maximum likelihood provided a better fit. They remarked that the latter model was only provisional and  shown for demonstration. 

For the analysis of the street crimes in this paper we fitted instead all three model classes (LGCP, ICP, and PCPP) given in Section~\ref{s:cox}. 
For the Gaussian process underlying the Cox process models, we used for simplicity a constant mean and an isotropic exponential covariance function with metric $d=d_{\mathcal R}$ 
(since the network is not a 1-sum of trees and cycles, the geodesic metric will not give well-defined models). Thus we have (up to) four unknown parameters in each model: $\rho$, the intensity of the point process; $\sigma^2$, the variance of the Gaussian process (this is not  a parameter in the PCPP model); $s$, the scaling of the exponential covariance function; and $h$, the number of Gaussian processes used in the model (this is not  a parameter in the LGCP model). 

To estimate the parameters of all three models, we used the unbiased estimate $\hat{\rho} ={n(x)}/{|L|}= 0.00372$ for the intensity and estimated the remaining parameters by the 
minimum contrast method based on the pair correlation function, cf.\ \eqref{e:contrast}. We estimated $g_0$ non-parametrically on the interval $r\in[0,100]$, but since the shape of the non-parametric estimate mostly resembled the shapes of the theoretical pair correlation functions for the three models on the interval $r\in[20,100]$, in \eqref{e:contrast} we 
let $a_1=20$ and $a_2=100$ while $p$ and $q$ were given by the default values in \texttt{spatstat}. 

The estimated parameter values are shown in Table~\ref{tab:chicago-estimates}. Figure~\ref{fig:chicago-pcf} shows the non-parametric estimate together with the estimated pair correlation functions for each of the three models, where 
the estimated pair correlation functions are very similar for the LGCP and the ICP, while the pair correlation function for the PCPP deviates from the other two at low distances. 


\begin{table}
	\centering
	\begin{tabular}{l||c|c|c|c}
		Model & $\hat\rho$ & $\hat{\sigma}^2$ & $\hat s$ & $\hat h$ \\
		\hline\hline
		LGCP  & 0.00372 & 1.70 & 0.0213  & -- \\
		ICP   & 0.00372 & 22.8 & 0.00747 & 2 \\
		PCPP   & 0.00372 &   --  & 0.00988 & 1
	\end{tabular}
	\caption{Parameter estimates for the LGCP, ICP, and PCPP models for the Chicago crime dataset.  A dash  indicates that the parameter is not present in the model.} 
	\label{tab:chicago-estimates}
\end{table}

\begin{figure}
	\begin{center}\includegraphics[width=7cm]{"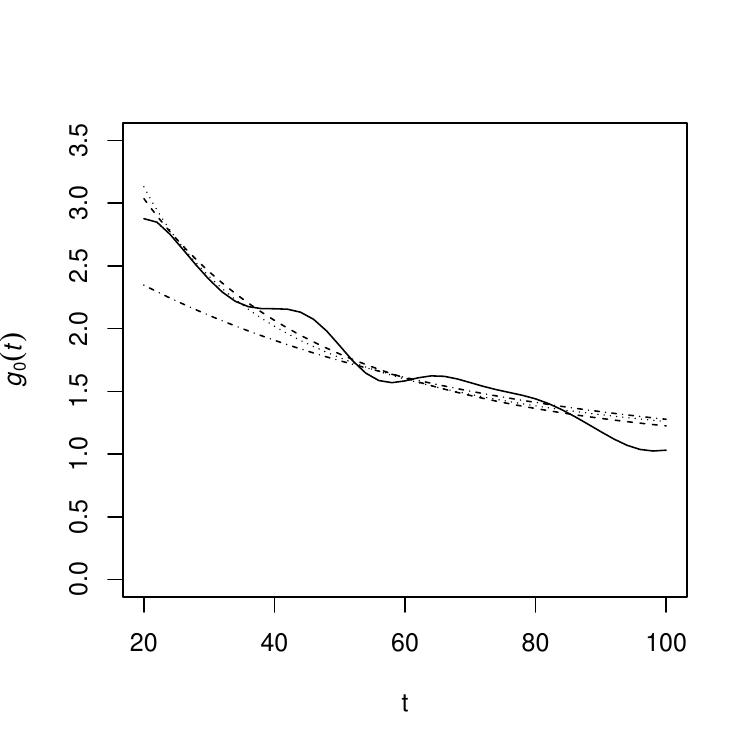"}\end{center}
	\caption{Pair correlation functions for the Chicago crime dataset: Non-parametric estimate (solid curve) and curves estimated by minimum contrast for a LGCP (dashed curve), an ICP (dotted curve), and a PCPP (dashed-dotted curve) model.}
	\label{fig:chicago-pcf}
\end{figure}

The left column of Figure~\ref{fig:chicago-sim} shows a simulation from each of the three fitted models. None of the simulations show strong deviation from the data, although the ICP does show a tendency to have small densely packed clusters of points that are not present in the data. Moreover, the right column of Figure~\ref{fig:chicago-sim} shows the 95\% global envelopes based on a concatenation of the empirical $F,G,J$-functions (based on 999 simulations) discussed at the end of Section~\ref{s:est-check}. The LGCP provides the best fit with a $p$-value of 0.155 for the global envelope test. The ICP provides a rather bad fit to the data with a $p$-value of just 0.004, and the envelope shows a clear discrepancy between data and the model at distances below 50, where the discrepancy indicates more clustering in the fitted ICP model than in the data.
The PCPP model shows a decent fit with a $p$-value just below 0.05 due to the $F$-function going outside the envelope at distance 100. 

\begin{figure}
	\begin{center}
		\includegraphics[width=6.5cm]{"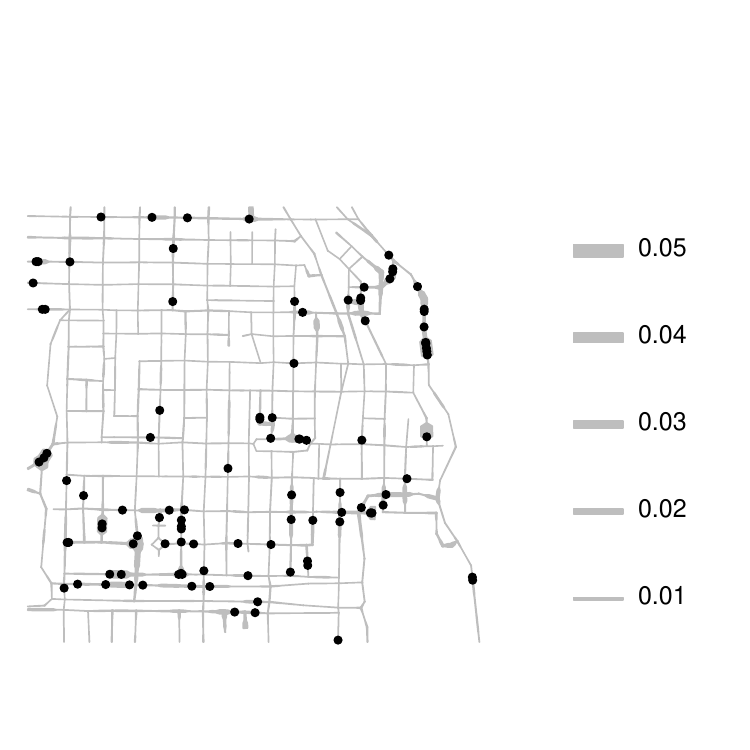"} 
		\includegraphics[width=6cm]{"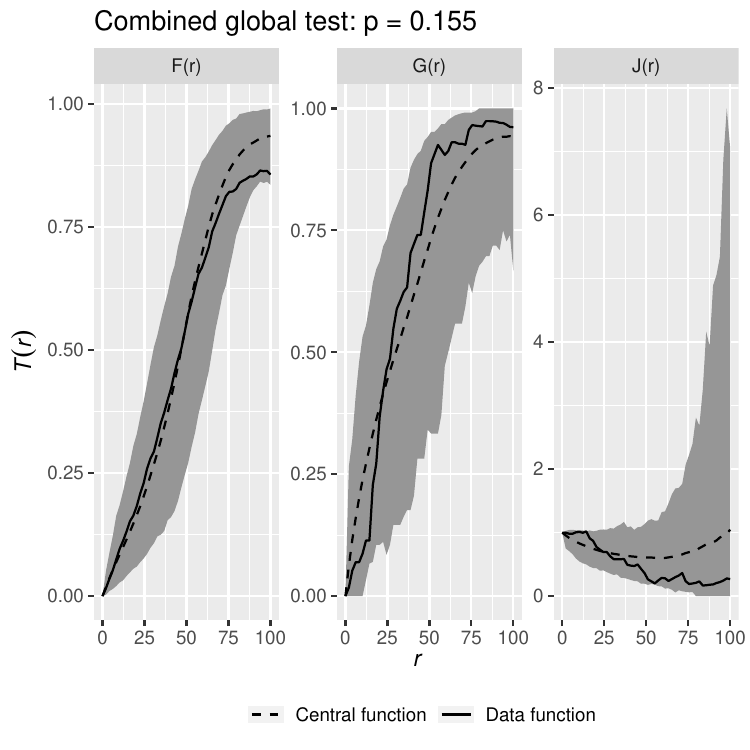"}
		\includegraphics[width=6.5cm]{"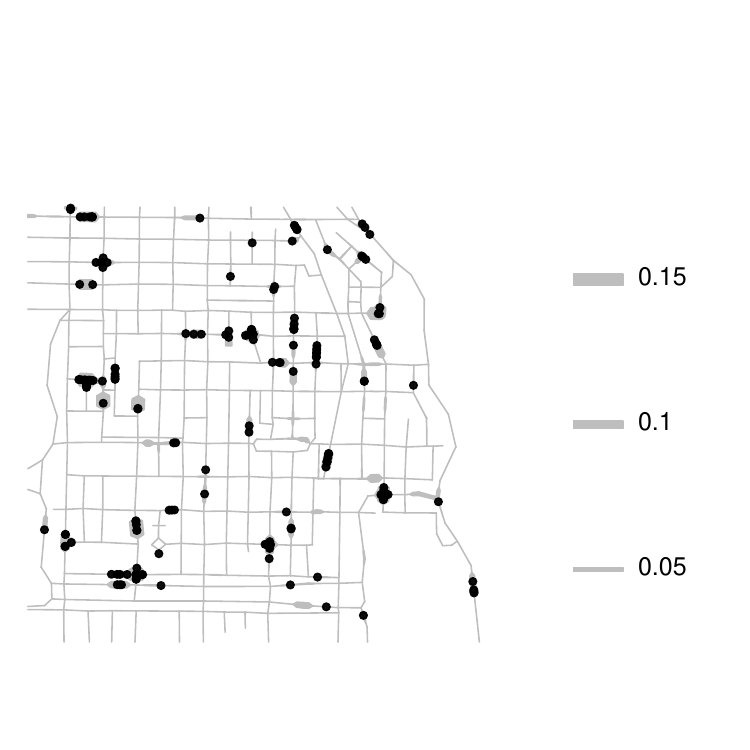"}
		\includegraphics[width=6cm]{"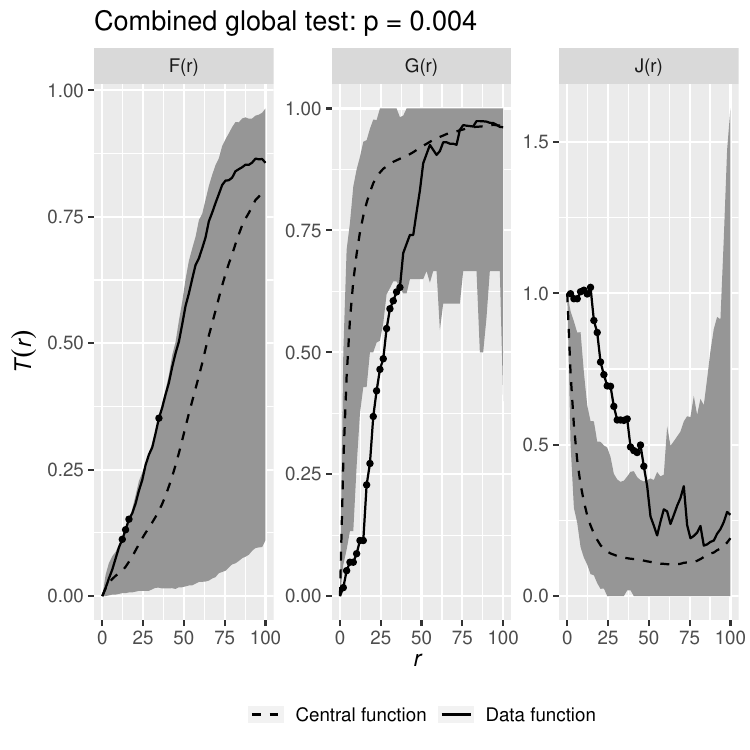"}
		\includegraphics[width=6.5cm]{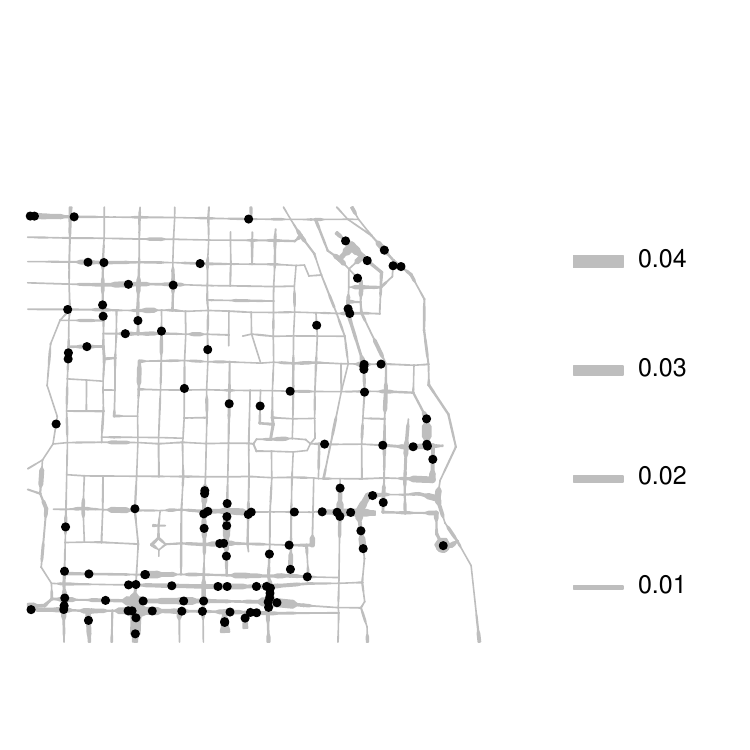}
		\includegraphics[width=6cm]{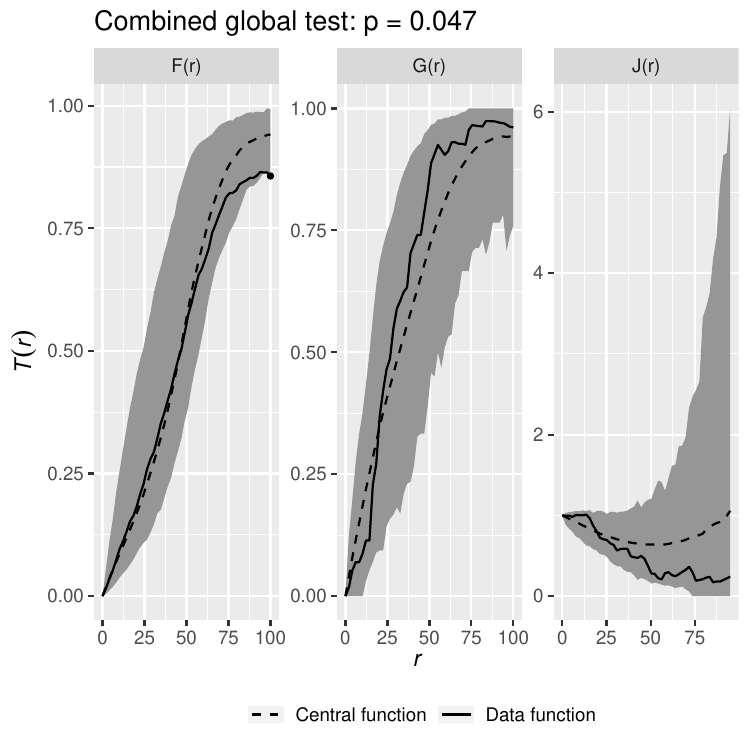}
	\end{center}
	\caption{Left column: Simulations of point patterns under the estimated LGCP (upper row), ICP (middle row) and PCPP (lower row) models for the Chicago crime dataset. The width of the lines shows the simulation of the underlying transformed Gaussian processes (the random intensity functions), while the points show the simulated point pattern. Right column: 95\% global envelopes for the fitted LGCP (upper row), ICP (middle row), and PCPP (lower row) models for the Chicago crime dataset. The $p$-values of the global envelope test are shown above each plot.}
	\label{fig:chicago-sim}
\end{figure}
 


\subsection{Dendrite data}\label{s:den-an}

\citet{HeidiMe} analysed several point pattern datasets given by spine locations on different dendrite trees which were identified by linear networks. For each dataset they fitted an inhomogeneous ICP with $h=1$, $c$ an isotropic exponential covariance function with metric $d=d_{\mathcal G}=d_{\mathcal R}$, and an inhomogeneous intensity function given by a constant intensity $\rho_{\mathrm{mb}}>0$ on the main branch and a different constant intensity $\rho_{\mathrm{sb}}>0$ on the side branches. 

In this section we restrict attention to the dataset for dendrite number five in \citet{HeidiMe} (the right panel of Figure~\ref{fig:data}) modelled by 
an ICP with $h=1$ and considering different covariance function models. Hence we
estimated the intensity parameters by $\hat\rho_{\mathrm{mb}}= {n(x_{\mathrm{mb}})}/{|L_{\mathrm{mb}}|}=0.119$ and $\hat\rho_{\mathrm{sb}} = {n(x_{\mathrm{sb}})}/{|L_{\mathrm{sb}}|}=0.184$ where $L_{\mathrm{mb}}$ denotes the main branch, $L_{\mathrm{sb}}$ the union of side branches, and $x_{\mathrm{mb}}$ and $x_{\mathrm{sb}}$ the point patterns restricted to these two subsets of $L$. 



First, we let $c(u,v)=\sigma^2\exp(-sd(u,v))$ be an isotropic exponential covariance function with $d=d_{\mathcal G}=d_{\mathcal R}$. Then we estimated $(\sigma^2,s)$ following the recommendations in \citet{HeidiMe}, i.e., we 
used the minimum contrast method with  $a_1=0$, $a_2=50$, $p=2$, and $q=1$ in \eqref{e:contrast}.
The obtained estimate $(\hat\sigma^2,\hat s)$ is given in Table~\ref{tab:neu-estimates} and the corresponding estimated pcf is shown in Figure~\ref{fig:neu-pcf} (the dotted curve). The estimate 
deviates a bit from that obtained in \citet{HeidiMe}, which is due to a different implementation of the non-parametric estimation of the pair correlation function.
Figure~\ref{fig:neu-FGJ} (left panel) shows the 95\% global envelope based on a concatenation of the empirical $F,G,J$-functions (based on 999 simulations). This shows a satisfactory fit with a $p$-value of $0.26$ for the global envelope test. The $p$-value deviates substantially from the value obtained in \citet{HeidiMe}, which may be due to the different estimates of $(\sigma^2,s)$ and a different number of simulations used in the global envelope test.

\begin{table}
	\centering
	\begin{tabular}{l||c|c}
		Covariance function &  $\hat{\sigma}^2$ & Other estimates\\
		\hline\hline
		Exponential   & $3.90$ & $\hat{s}=0.0356$ \\
		Gamma Bernstein CDF  & $3.91$ &  $(\hat\tau,\hat\phi)=(22.6, 626)$\\ 
		Inverse gamma Bernstein CDF & $3.90$ &  $(\hat\tau,\hat\phi)=(163, 5.79)$ \\
		Generalised inverse Gaussian Bernstein CDF & $3.90$ & $(\hat\psi,\hat\chi,\hat\lambda) = (213, 35.8, -499)$ \\
		Inverse gamma Bernstein CDF with fixed $\tau$  & $4.63$ & $(\hat\tau,\hat\phi)=(2, 0.0188)$ \\
	\end{tabular}
	\caption{Parameter estimates for the ICP models with various covariance functions for the dendrite dataset.} 
	\label{tab:neu-estimates}
\end{table}

\begin{figure}
	\begin{center}
		\includegraphics[width=6cm]{"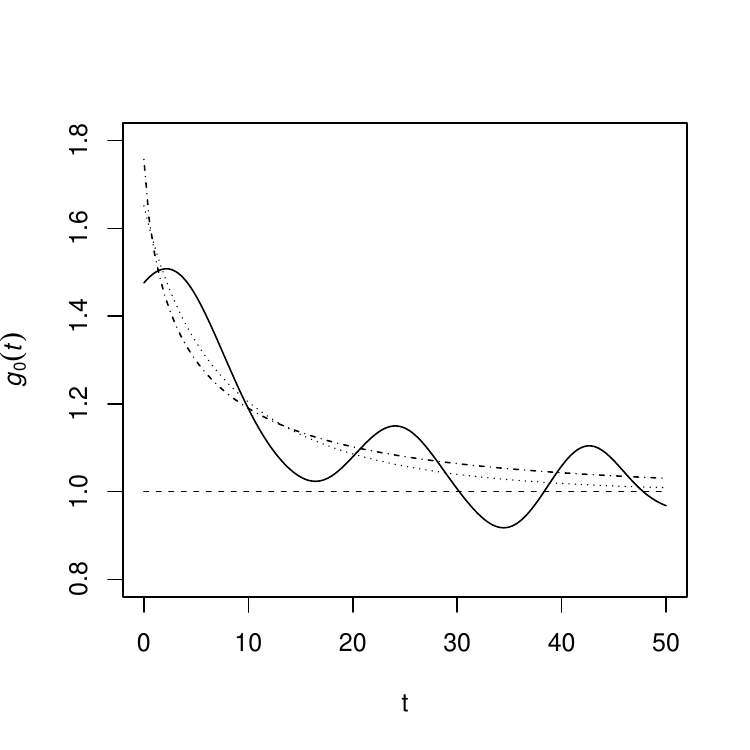"}\end{center}
	\caption{Pair correlation functions for the dendrite dataset: Non-parametric estimate (solid curve), and curves estimated by minimum contrast for ICP with exponential covariance  (dotted curve) and covariance function with inverse gamma Bernstein density with fixed $\tau=2$  (dotted-dashed curve).}
	\label{fig:neu-pcf}
\end{figure}

\begin{figure}
	\begin{center}
	\includegraphics[width=6cm]{"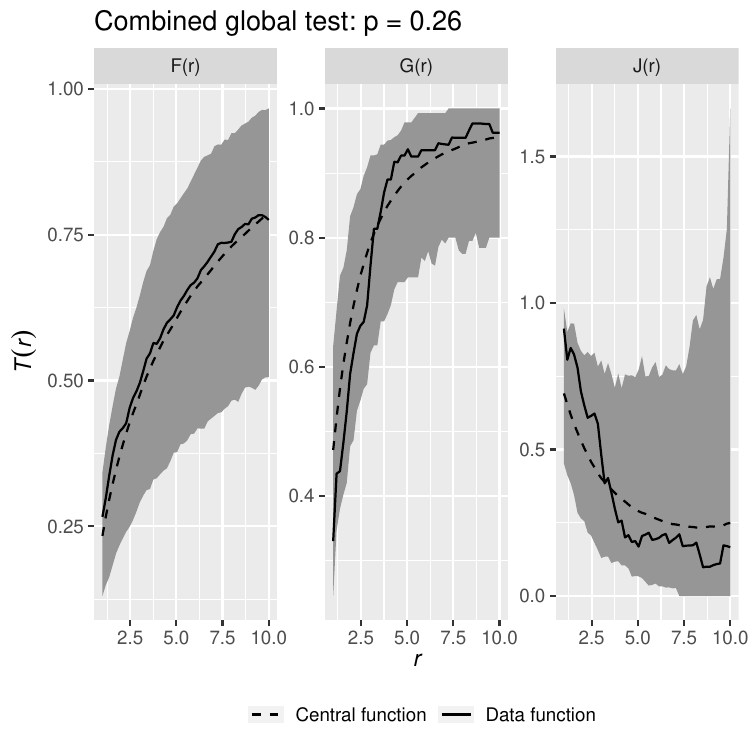"}
	\includegraphics[width=6cm]{"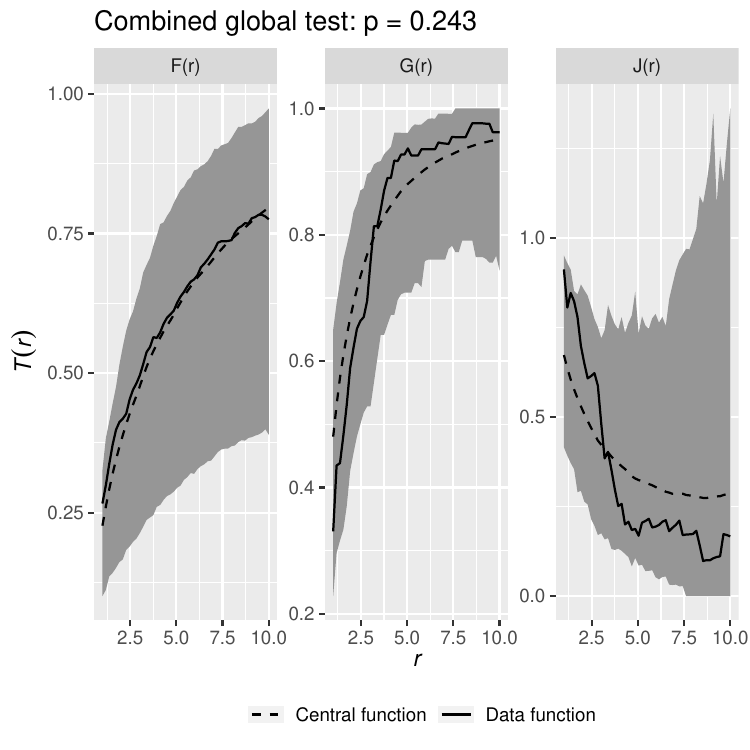"}
	\end{center}
	\caption{95\% global envelopes for the fitted ICP models with an exponential covariance function (left) and a covariance function with an inverse gamma Bernstein CDF with $\tau=2$ (right) for the dendrite dataset. The $p$-values of the global envelope test are shown above each plot.}
	\label{fig:neu-FGJ}
\end{figure}

Second, we let $c$ be one of the three covariance functions given in Example~\ref{ex:0}, i.e., those with a gamma, an inverse gamma, and a generalised inverse Gaussian Bernstein CDF. We used the same minimum contrast procedure as above for estimating the parameters, thereby obtaining the estimates in Table~\ref{tab:neu-estimates}. 
For all three cases of the estimated Bernstein CDFs, the variance is close to zero meaning that the distributions are almost degenerate and the corresponding covariance functions are very close to exponential covariance functions. Since the mean values of these distributions are very close to $\hat s$ from the estimated exponential covariance function, all three estimated covariance functions are very close to the estimated exponential covariance function, thus leading to the same estimated model. 
This suggests possible problems with the estimation procedure, which we will explore in a simulation study in Section~\ref{s:simstudy}, or with unidentifiability of the parameters of the Bernstein CDF.
 
To try out a model which do not become almost identical to the model using the exponential covariance function, we considered a covariance function with an inverse gamma Bernstein CDF with $\tau=2$ fixed (corresponding to an inverse gamma distribution with an infinite variance). 
Table~\ref{tab:neu-estimates} shows the minimum contrast estimate and Figure~\ref{fig:neu-pcf} shows 
 the corresponding estimated covariance function. This covariance function has a larger variance parameter $\sigma^2$ and a heavier tail than the estimated exponential covariance function. The 95\% global envelope in Figure~\ref{fig:neu-FGJ} (right panel) and the $p$-value of $0.243$ for the global envelope test reveal that the covariance function with inverse gamma density and fixed parameter $\tau=2$ provides a similar good fit as the exponential covariance function. This observation is further discussed in Section~\ref{s:7.3}.

\subsection{Simulation study}\label{s:simstudy}

The analysis of the dendrite dataset showed the problem that we in practice get an exponential covariance function when we fit the covariance functions given in Example~\ref{ex:0}. It should be noted that all these covariance functions have the exponential covariance function as a limiting case, which makes it possible to get arbitrarily close to an exponential covariance function in the estimation procedure, while this was not the case when the $\tau$ parameter was fixed in the analysis of the dendrite data.

To explore whether this was a general problem or simply applied to the dendrite dataset, we made a number of simulations using covariance functions which were different than the exponential case to see if we still obtained exponential covariance functions from the estimation procedure. Specifically we took the network used in the dendrite dataset and simulated 1000 simulations of an ICP with a homogeneous intensity function and a covariance function with inverse gamma Bernstein CDF. We did this for various combinations of parameters, where $\sigma^2\in\{10^{-1},10^0,10^1,10^2,10^3,10^4\}$ and $\tau\in\{1.1,1.5,2,5\}$ while the rest of the parameters were given by $(\rho, h, \phi) = (1, 1, 0.02)$. The mean number of points  is $\mathbb E N(L) = \rho|L| = 736$ for all the chosen parameter settings. For each simulation we fitted two models using minimum constrast estimation (using the same values of $a_1,a_2,q,p$ as in Section~\ref{s:den-an}): an ICP with an inverse gamma Bernstein CDF with $h$ and $\tau$ fixed at their true values, and an ICP with an exponential covariance function with $h=1$ fixed. 
For each simulation, we calculated the non-parametric estimate of the pcf and used $D$ in \eqref{e:contrast} for calculating its distance to each of the two pcfs for the estimated models; using an obvious notation, these distances are denoted $D_\text{Exp}$ and $D_\text{IG}$. For each combination of parameters $\sigma^2$ and $\tau$, Table~\ref{tab:comparison} shows the percentage of simulations where $D_\text{IG}>D_\text{Exp}$, i.e., the cases where the non-parametric estimate of the pcf resembles an exponential covariance function more than a covariance function with inverse gamma Bernstein CDF with $\tau$ equal to the value used in the simulations. To quantify how far the ICP is from a Poisson process, the values of $\varphi$ given by \eqref{e:varphiICP} and $p_\textrm{ms}$ given by \eqref{e:pms} are also shown in Table~\ref{tab:comparison}. The row in the table corresponding to $\sigma^2=10^{-1}$ contains values close to $50\%$, which makes sense since $g_0(0)-1=0.00416$ or $p_\mathrm{ms}=0.912$ reveal that the simulated processes are almost Poisson.
In the rest of the table the non-parametric estimate are typically closest to an exponentiel covariance function, showing that there indeed is a tendency for getting an estimated model corresponding to an exponential covariance function (although the values seem to decrease in the last row). 

\begin{table}
	\centering
	\begin{tabular}{c|c|c|c|c|c|c}
		$\sigma^2$ & $\varphi$ & $p_\textrm{ms}$ & $\tau=1.1$ & $\tau=1.5$ & $\tau=2$ & $\tau=5$  \\
		\hline\hline
		$10^{-1}$  & 0.00416 & 0.912 & 56.6\% & 54.3\% & 53.3\% & 48.3\% \\
		$10^0$     & 0.155   & 0.577 & 81.1\% & 84.6\% & 86.8\% & 81.6\% \\
		$10^1$     & 1.40    & 0.218 & 74.3\% & 80.1\% & 85.8\% & 89.2\% \\
		$10^2$     & 6.12    & 0.0705& 76.1\% & 82.2\% & 82.6\% & 87.1\% \\
		$10^3$     & 21.4    & 0.0223& 82.5\% & 79.8\% & 79.0\% & 81.0\% \\
		$10^4$     & 69.7    &0.00707& 55.6\% & 60.1\% & 66.1\% & 77.3\% \\
	\end{tabular}
	\caption{The percentage of simulations of an ICP with $D_\text{IG}>D_\text{Exp}$ for various combinations of $\sigma^2$ and $\tau$ values, where the corresponding values of $\varphi$ and $p_\mathrm{ms}$ are shown.} 
	\label{tab:comparison}
\end{table}

We also made similar simulation studies for LGCPs and PCPPs. These showed similar values to those in Table~\ref{tab:comparison} for the LGCP, while the values where significantly smaller for the PCPP (typically in the range $30\%$-$40\%$ when $\varphi=2$ is as large as possible in this model).

The results of this section are further discussed in Section~\ref{s:7.3}.



\section{Concluding remarks}\label{s:conclusion}

Our results and considerations on point processes on linear networks in the previous sections give rise to various conclusions and open research problems which we discuss in Sections~\ref{s:7.1}--\ref{s:7.4} below.

\subsection{New point process models}\label{s:7.1}

\citet{Stationary} mentioned the lack of repulsive models on linear networks. An interesting case is a determinantal point process (DPP) which to the best of our knowledge has yet not been investigated in connection to linear networks. 
 For any given covariance function $c$ on $L$, a well-defined DPP will be specified by the density 
\begin{equation}\label{e:dppdens}
f(\{u_1,...,u_n\})\propto \mathrm{det}\left(\{c(u_i,u_j)\}_{i,j=1,...,n}\right)
\end{equation}
with respect to the unit rate Poisson process on $L$. A DPP is a model for repulsiveness (e.g.\ $g\le1$) and it {possesses} a number of appealing properties.
See
\citet{FredEtAl} (and the references therein) who considered DPPs with points in $\mathbb R^k$ but many things are easily modified to DPPs on linear networks.

Several facts are interesting: If $c$ is isotropic then the pcf of the DPP is isotropic, and e.g.\ 
$c$ could be specified by one of 
the isotropic covariance functions given in Table~\ref{tab:covariances-sphere} and Example~\ref{ex:0}. 
The normalizing constant which is omitted in \eqref{e:dppdens} can be expressed in terms of the eigenvalues of a spectral decomposition of $c$. This spectral decomposition is also needed when specifying the $n$-th order intensity functions and an efficient simulation algorithm of the DPP.  However, it is an open problem how to determine the eigenvalues and eigenfunctions of the spectral representation, in particular if $c$ is required to be isotropic. Until this problem has been solved, we need to work with the unnormalized density (in the right hand side of \eqref{e:dppdens}) and to use MCMC methods for approximating the normalizing constant \citep{Geyer,MW04}.

The recent paper \citet{BSW} on a new class of Whittle–Mat{\'e}rn GPs on compact metric graphs is interesting for several reasons including the following. 
The model class is flexible and contains differentiable GPs; the precision matrix at the vertices can be quickly calculated; and
Markov properties of the GP can be used for computationally efficient inference. Thus the Whittle–Mat{\'e}rn GPs may 
 serve as an interesting alternative to the GPs considered in the present paper. Although 
the
Whittle–Mat{\'e}rn covariance function is not isotropic, and hence in connection to items (I)--(VI) in Section~\ref{s:intro} may appear to be less useful, 
this could very well be an advantage as pointed out after item (VI).
 

\subsection{The choice of metric for isotropic covariance and pair correlation functions}\label{s:7.2}

Isotropic covariance functions are available for larger classes of linear networks with respect to the resistance metric than the geodesic metric, cf.\ Theorem~\ref{t:1}. Therefore, we are able to obtain LGCPs, ICPs, and PCPPs for larger classes of linear networks with respect to the resistance metric than the geodesic metric. 
Indeed, this kind of flexibility assumes that one uses the same covariance but in different metrics, which is a bit specific. Could we obtain the same flexibility with a different covariance function in the geodesic metric? How do we prove that it will actually be a valid covariance function?

Various other comments in the previous sections highlighted the interest of $d_{\mathcal R}$ compared to $d_{\mathcal G}$. In addition we notice the following.

Suppose $L$ is not a tree and that both $c_0(d_{\mathcal G}(u,v))$ and $c_0(d_{\mathcal R}(u,v))$ are well-defined isotropic covariance functions.
(Recall that $d_{\mathcal G}\ge d_{\mathcal R}$ with equality if and only if $L$ is a tree, cf.\ (C) in Theorem~\ref{t:res-m}.) 
Suppose also that $c_0> 1$ and $c_0$ is a decreasing function, so $c_0(d_{\mathcal G}(u,v))\le c_0(d_{\mathcal R}(u,v))$. This is 
typical for the covariance function of the GP underlying a LGCP, ICP, or PCPP
(an exception is a LGCP if $c_0$ can be negative but usually {$c_0\ge0$}). Consider the two LGCPs, ICPs, or PCPPs obtained by using the same type of transformed isotropic GP (or GPs) with the same mean and $c_0$-function but using the different metrics $d=d_{\mathcal G}$ and $d=d_{\mathcal R}$. Using an obvious notation, let us denote 
these two Cox processes by $X_{\mathcal G}$ and $X_{\mathcal R}$ and their corresponding pair correlation functions by 
$g_{\mathcal G}(u,v)=g_0(d_{\mathcal G}(u,v))$ and $g_{\mathcal R}(u,v)=g_0(d_{\mathcal R}(u,v))$. It follows from \eqref{e:pairLGCP}, \eqref{e:g0ICP}, and \eqref{e:pairPCPP} that   
 {$g_{\mathcal G}(u,v)\le g_{\mathcal R}(u,v)$}, indicating that $X_{\mathcal R}$ is able to model a higher degree of clustering than  
 $X_{\mathcal G}$.
 
%
%

On the other hand, for a DPP, we have always that $g_0\le 1$, and typically $g_0< 1$ and $g_0$ is an increasing function. So for two such DPPs with the same $g_0$-function but given by the different metrics $d_{\mathcal G}$ and $d_{\mathcal R}$, respectively, the DPP using $d_{\mathcal R}$ is able to a higher degree of repulsiveness than the DPP using $d_{\mathcal G}$. 


\subsection{Estimation}\label{s:7.3}


For parameter estimation in this paper we used minimum contrast estimation based on the pcf which worked well when we used an exponential covariance function, but this approach was not able to identify the parameters in the covariance functions given by the Bernstein CDFs in Example~\ref{ex:0} unless we fixed a subparameter, cf.\ Sections~\ref{s:den-an} and \ref{s:simstudy}. We are not sure if this is a problem of unidentifiability or caused by the choice of estimation procedure for the following reasons. 

On the one hand,
\cite{HeidiMe} concluded that for the dendrite data minimum contrast estimation based on the pcf performs better than if it is based on the $K$-function, and the alternative method based on  composite likelihood estimation is less reliable than minimum contrast estimation for the exponential covariance function, thus suggesting that this approach will not improve estimation for more complex covariance functions either.
This is also in line with the comments in \citet{BadEtAl2022} on estimation procedures. 
In particular, \citet[][Section 12]{BadEtAl2022} concluded that Neyman-Scott point process models are poorly identified under weak clustering, irrespective of the model parametrization, and this is not due to faults in currently available fitting
methods (they considered the Euclidean state space case but we do not expect the choice of space to be of importance for this conclusion).

On the other hand, if the problem is unidentifiability, one idea for estimating the parameters of the Bernstein CDFs could be to  
to use shrinkage estimators involving a penalty on cluster scale as was done in \citet{BadEtAl2022} using the parameter $\varphi$ in \eqref{e:varphi}.
However, we also remade the simulation studies behind Table~\ref{tab:comparison} in the case where we fixed the parameter $\sigma^2$ at its true value, which did not improve the estimation of the parameters in the Bernstein CDF, thus suggesting this approach might not work.

A more careful study of whether the problem is the choice of estimation procedure or unidentifiability, extending 
the methods of \citet{BadEtAl2022} to our setup, is left for future research. 
More ambitiously than minimum contrast, composite likelihood, and other moment based estimation procedures \citep{MW17}, future work may also investigate likelihood-based inference for parametric point process models on linear networks. This could include the adaptation of missing data MCMC methods for maximum likelihood \citep{Geyer,MW04} and Bayesian approaches \citep{RueEtAl,GT} to linear networks. 

\subsection{Miscellaneous}\label{s:7.4}

In \eqref{e:bound-prob} we provide a bound in probability of the approximation error of Algorithms 1 and 2 due to the discretization. This may be extended to a bound for the maximal approximation error when considering variables $(Y(u_1),...,Y(u_n))$ with $u_1,...,u_n$ on the same line segment of $L$ (using a similar approach as in Equation~(4.3) in Wood and Chan~(1992) which is based on an inequality for multivariate normal probabilities on rectangles, see Chapter~2 in Tong~(1982)).\nocite{WoodChan} \nocite{tong} Furthermore, for Algorithm 3, it could be interesting to establish convergence rates. 
  
The special results in Section~\ref{s:LGCP} for LGCPs on linear networks (as well as LGCPs defined on other state spaces like $\mathbb R^k$ and $\mathbb S^k$) could possibly be exploited for developing techniques of model fitting or checking, where we have the following results in mind. 
Assume the covariance function of the underlying GP is isotropic. The $K$-functions based on $X$ and $X_u$ agree, so if $X$ is observed within a bounded window $W\subseteq L$, empirical estimators $\hat K$ and $\hat K_u$ for $u\in X_W$ are expected to be close. 
Moreover, the special structure in 
\eqref{e:n-LGCP} implies that 
\[\frac{\rho(u_1,u_2,u_3)}{\rho(u_1)\rho(u_2)\rho(u_3)}=g_0(d(u_1,u_2))g_0(d(u_1,u_3))g_0(d(u_2,u_3)).\]
This structure was exploited in \citet[][]{LGCP} for LGCP with points in $\mathbb R^k$ to construct a third-order moment characteristic which is useful for model checking, but it remains to consider the case of a LGCP on a linear network.

We have given various references for non-parametric estimation of the intensity, pair correlation, and $K$-functions on linear networks, cf.\ Sections~\ref{s:3.2} and \ref{s:3.2.3}. In the inhomogeneous case these estimators depend `locally' on the intensity function at the individual
observed points. Recently, when considering point processes on Euclidean spaces, \citet{ShawEtAl} demonstrated the advantages of introducing new global estimators over the
existing local estimators. It remains to consider the case of point processes on other spaces including linear networks.

\bibliography{references}

\end{document}